\newtheorem{theorem}{Theorem}[section] 
\newtheorem{lemma}[theorem]{Lemma} 
\newtheorem{proposition}[theorem]{Proposition} 
\newtheorem{definition}{Definition}[section] 
\begin{document}

\title{A unified approach to $q$-special functions \\ of the Laplace type} 








\author{Yousuke Ohyama}









\date{ }

\maketitle

\begin{abstract}
We propose a unified approach to $q$-special functions, which are 
degenerations of basic hypergeometric functions ${}_2\varphi_1(a,b;c;q,x)$. 
We obtain a list of seven different class of  $q$-special functions: 
${}_2\varphi_1,  {}_1\varphi_1$, two different types of the  $q$-Bessel functions, 
the $q$-Hermite-Weber functions, two different types of the $q$-Airy functions. 
We show that there exist a relation between two types of the $q$-Airy functions. 

\end{abstract}





\section{Introduction} \label{s:intro}

In study of classical special functions,  unified theories help  us 
to understand  special functions.  We can study classical special functions
defined by differential equations easily by confluence of singular points 
or the method of separation of variables. 
Since we do not have such a unified theory 
for $q$-special functions, it  is difficult to decide whether two different 
special functions have relations or not.  
For example, we know three 
different $q$-Bessel functions and two different $q$-Airy functions and 
we know one relation between the first and the second $q$-Bessel function. 
But it is not easy to determine they are completely different or not in 
general.  

Recently many textbooks on $q$-special functions have been 
published such as Koekoek-Lesky-Swarttouw\cite{Koe}, but they have not shown unified theories of $q$-special functions
although they list up a huge list of special functions. 
In this paper, we give a unified theory for $q$-special functions, which 
come from degeneration of the basic hypergeometric functions ${}_2\varphi_1(a,b;c;q,x)$. 
In our viewpoint, we have essentially two different types of $q$-Bessel functions.
We also see that two types of $q$-Airy equations are essentially equivalent but 
they are different as  $q$-series.  A connection formula of $q$-Airy equations is 
recently found by T.~Morita \cite{Morita}. 
Our list is not enough to study whole of the Askey scheme.  It is a future problem 
to expand our unified theory  to include the Askey-Wilson polynomials.

 One of the most famous theory on a unified 
approach to classical special functions is 
by confluence of singularities \cite{Bocher, Klein}. 
We denote  irregular singular points of the Poincar\'e rank $r-1$ as $(r)$ 
in Figure \ref{classic}. 
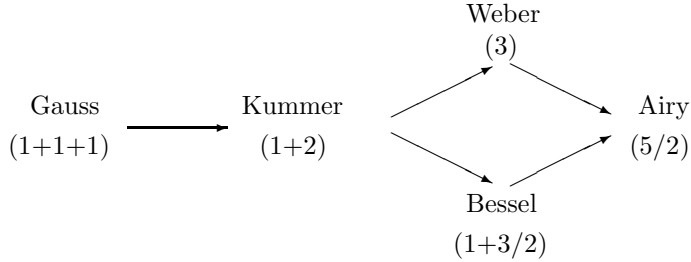
\begin{figure}[ht] \label{classic}
	\begin{picture}(300,90)(30,0)
        \put(40,52){{  Gauss} }
        \put(35,37){(1+1+1) }
        \put(120,52){{  Kummer}}
        \put(130,37){(1+2)}
        \put(205,15){{  Bessel}}
        \put(200,0){{  (1+3/2)}}
        \put(205,87){{  Weber}}
        \put(215,75){{(3)}}
        \put(270,52){{  Airy}}
        \put(270,37){{(5/2)}}
        \put(80,47){\vector(1,0){38}}
        \put(180,51){\vector(2,1){38}}
        \put(180,45){\vector(2,-1){38}}
        \put(225,25){\vector(2,1){38}}
        \put(225,71){\vector(2,-1){38}}
     \end{picture}
     \caption{The coalescent diagram of classical special functions}
\end{figure}
We 
may consider that regular singular points are singularity of the Poincar\'e rank $0$. 
We remark that we consider the Poincar\'e rank of the Bessel equation at the infinity
is 1/2, because we think ${}_0F_1(c;x)$ is a {\it true} Bessel function (see section \ref{s:two}). 
The author has learned from Professor Hideyuki  Majima that the word ``confluent'' 
appeared in the second edition of {\it Modern Analysis} \cite{Whittaker-Watson} and did not appear in the first edition  written without Watson in 1902.

Another approach is by separation of variables of the Laplacian by orthogonal 
coordinates.  This method is useful to study the Mathieu functions and the spheroidal wave 
functions \cite{Bocher, Meixner-Schafke}.  

In the study of $q$-special functions, we do not have such a unified approach 
since it is difficult to consider confluence of singular points or 
separation of variables  in the  $q$-analysis.

The third approach is classify differential equations of the Laplace type \cite{Tricomi} 
 \[
(a_0 +b_0 x) \dfrac{d^2 y}{dx^2} + (a_1 +b_1 x) \dfrac{d y}{dx} +
(a_2 +b_2 x)y=0
\] 
by means of change of variables $x\to p x+q$ and $y\to g(x) y$.  
We obtain Kummer, Weber, Bessel and Airy functions from 
equations of the Laplace type.  We review the third approach in section \ref{s:two}. 

In section \ref{s:qhyper}, we review different types of $q$-Bessel functions 
and $q$-Airy functions.  In this section we introduce two important tools to 
study $q$-difference equations. One is a shearing transformation and the second 
is  gauge transformations by $q$-products and    theta functions.

The third approach can be easily modified in $q$-difference 
equations.  We call a $q$-difference equation of the second order with 
the linear coefficients
 \[
(a_0 +b_0 x) u(xq^2)  + (a_1 +b_1 x) u(xq) +(a_2 +b_2 x)u(x)=0
\] 
{\it the hypergeometric type}.   In  section \ref{s:class}, we classify 
$q$-difference equations of the hypergeometric type and obtain 
seven types of $q$-difference equations:   

\begin{figure}[ht] \label{q-conf}
	\begin{picture}(370,70)(-10,0)
        \put(10,30){{  $ _2 \varphi_1(a,b;c;z)$ } }
        \put(110,30){{ $ _1 \varphi_1(a;c;z)$   }}
        \put(200,5){{ $ _1 \varphi_1(a;0;z)$   }}
        \put(205,55){{ $J_\nu^{(3)}$  }}
        \put(205,30){{ $J_\nu^{(1)}$ }}
        \put(270,55){{  $q$-Airy}}
        \put(270,18){{ Ramanujan}}
        \put(80,32){\vector(1,0){27}}
        \put(170,36){\vector(3,2){27}}
        \put(170,32){\vector(1,0){27}}
        \put(230,55){\vector(1,0){27}}
        \put(170,28){\vector(3,-2){27}}
        \put(255,8){\vector(2,1){14}}
        \put(230,32){\vector(3,-1){38}}
     \end{picture}
     \caption{The coalescent diagram of $q$-special functions}
\end{figure}
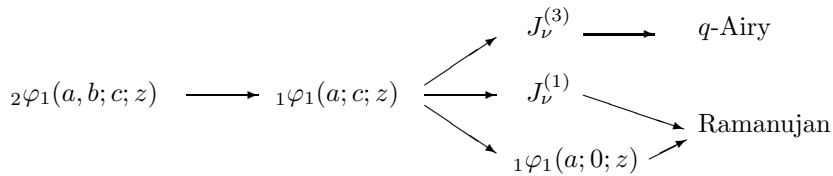

In section \ref{s:Pain}, we compare our classification of  $q$-difference equations
of the hypergeometric type to special solutions of the $q$-Painlev\'e equations.

We may consider $q$-difference equations in the matrix form 
$Y(xq)=A(x)Y(x)$.
  The matrix form has more parameters than 
a single equation of the  higher order.  These parameters 
are redundant but we can reduce the number of parameters since  the 
matrix form of $q$-difference equation have more transformations than 
the scalar form of higher order. 
For a $q$-difference equation of the hypergeometric type,  the corresponding 
matrix form is a $2\times2$ system $$Y(xq)=(A_0+A_1x)Y(x).$$
In section \ref{s:mat} we classify this type of equations.

\medskip


The author expresses his thanks to Professor Lucia Di Vizio for fruitful 
discussions when she was invited to the University of Tokyo in the winter 2011. 
The author also  thanks Professor Hidetaka Sakai,   Mr.~Yoshinori Matsumoto and  Mr.~Takeshi Morita 
for continuous discussions. 
This work is supported by the Mitsubishi foundation and the JSPS Grant-in-Aid for Scientific Research. 

\section{Differential equation of the Laplace type } \label{s:two}
In this section, we show  confluent hypergeometric series and 
classification of second order differential equations of the Laplace type. 
Although these results are already known, we review
classical results\cite{Tricomi} in order to fix our notations. 

\subsection{Confluent hypergeometric series} \label{s:two:conf} 
 We set confluent hypergeometric series found by Kummer.
\begin{eqnarray*}
{}_1F_1(a;c;z) &=&\sum^{\infty}_{n=0} 
\frac{a(a+1)\cdots(a+n-1)}{c(c+1)\cdots(c+n-1)} \frac{x^n}{n!},\\
{}_0F_1(-;c;z) &=&\sum^{\infty}_{n=0} 
\frac{1}{c(c+1)\cdots(c+n-1)} \frac{x^n}{n!}.
\end{eqnarray*}
Kummer's second confluent hypergeometric series ${}_0F_1 (-;c; z)$
satisfies the differential equation
$$x \dfrac{d^2 y}{dx^2}  + n \dfrac{d y}{dx}- y=0.$$
The infinity  is  an irregular singular point of the Poincar\'e
 rank $1/2$. 
The author does not know standard term ({\it degenerate confluent hypergeometric series} 
or {\it confluent limit hypergeometric series}) for ${}_0F_1$. 
Tricomi \cite{Tricomi} used  the notation $E_\nu(z)= 
{}_0F_1 (-;\nu+1; -z)/ \Gamma(\nu+1)$, 
Jahnke and Emde \cite{JE}  used another notation
 $\Lambda_\nu(x)= {}_0F_1 (-;\nu+1; -x^2/4)$. 

It is known that the Bessel functions are represented by 
confluent hypergeometric series in two ways. 
One is related to ${}_1F_1$:
$$J_\nu(z)=\frac{z^\nu e^{-iz}}{2^\nu \Gamma(\nu+1)} {}_1F_1
(1/2+\nu, 1+2\nu; 2iz),$$
and the second is related to ${}_0F_1$:
$$  J_\nu (2\sqrt{z}) = \frac{z^{ \nu/2}}{\Gamma(\nu+1)} {}_0F_1 (-;\nu+1; -z).
$$
 It is convenient to take ${}_0F_1 (-;c; z)$ as a
 standard form of the Bessel function at least theoretically. 
 But in many applications in mathematical physics, $J_\nu (z)$ is used since it is obtained as
 the Cylindrical functions. 
 
 \subsection{Classification of  equations of the Laplace type} \label{s:two:class-de}

 A second order differential equation of the Laplace type
\begin{equation}\label{laplace}
(a_0 +b_0 x) \dfrac{d^2 y}{dx^2} + (a_1 +b_1 x) \dfrac{d y}{dx} +
(a_2 +b_2 x)y=0
\end{equation}
is solved by special functions explained in subsection \ref{s:two:conf}.  
We remark that by the transformations ($\lambda, \mu$ and $h$ are constants)
$$x=\lambda \xi +\mu, \quad y=e^{hx}\eta,$$
$\eta(\xi)$   also satisfy a differential equations of the Laplace type \eqref{laplace}.

We set
\[
A(h)=a_0h^2+a_1 h+a_2, \quad B(h)=b_0h^2+b_1 h+b_2.
\]
Classification of second order differential equations of the Laplace type is 
as follows (see Capitolo Primo in Tricomi's book\cite{Tricomi}).
\begin{theorem}  The equation \eqref{laplace} is solved by special functions
${}_1F_1(a;c;z)$, $J_\nu(z)$, ${}_1F_1(a;1/2;z)$ and $J_{1/3}(z)$. 

(i) Assume that $b_0 \not=0$ and $\Delta:=b_1^2-4b_0b_2 \not=0$.  We set
$$h=\frac{-b_1\pm \sqrt{\Delta}}{2b_0},\  \lambda =-\frac{b_0}{B'(h)}, \ 
\mu = -\frac{a_0}{b_0},\ a= \frac{A(h)} {B'(h)},\ c=\frac{a_1b_0-a_0b_1}{b_0^2}. 
$$
Then \eqref{laplace} is solved by $\eta={}_1F_1(a;c;\xi)$, where 
$x=  \lambda\xi +\mu$ and  $y=e^{hx}\eta.$

(ii)  Assume that $b_0 \not=0$ and $\Delta = 0$.  We set
$$h=-\frac{b_1}{2b_0},\  \lambda =b_0, \ 
\mu = -\frac{a_0}{b_0},\ \alpha= \frac12-\frac{A'(h)}{2b_0},\ \beta=2\sqrt{A(h)}. 
$$
Then \eqref{laplace} is solved by $\eta=\xi^\alpha J_{2\alpha}(\beta  \sqrt{\xi})$, where 
$x=  \lambda \xi +\mu$ and  $y=e^{hx}\eta.$

(iii) Assume that $b_0 =0$ and $a_0b_1 \not=0$.  We set
$$h=-\frac{b_2}{ b_1},  \ \mu = -\frac{A'(h)}{b_1},\ 
a= \frac{A(h)} {2b_1},\ k=-\frac{ b_1}{2a_0}. $$
Then \eqref{laplace} is solved by $\eta= {}_1F_1(a;1/2; k \xi^2)$, where 
$x=    \xi +\mu$ and  $y=e^{hx}\eta.$

(iv) Assume that $b_0 =b_1=0$ and $a_0b_2 \not=0$.  We set
$$h=-\frac{a_1}{2a_0},\ \mu =  \frac{4a_0a_2-a_1^2}{4a_0b_2},
\ k= \frac{2}{3}\sqrt{\frac{b_2}{a_0}}. 
$$
Then \eqref{laplace} is solved by $\eta=\sqrt{\xi} J_{1/3}(k\xi^{3/2})$, where 
$x=    \xi +\mu$ and  $y=e^{hx}\eta.$

(v) For other cases, \eqref{laplace} is solved by elementary functions.

\end{theorem}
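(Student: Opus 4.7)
The approach rests on the observation that both transformations $y = e^{hx}\eta$ and $x = \lambda \xi + \mu$ preserve the Laplace form of the equation. A direct differentiation using $y' = e^{hx}(\eta' + h\eta)$ and $y'' = e^{hx}(\eta'' + 2h\eta' + h^2\eta)$ gives, after $y = e^{hx}\eta$,
\[
(a_0 + b_0 x)\eta'' + (A'(h) + B'(h)x)\eta' + (A(h) + B(h)x)\eta = 0,
\]
with $A, B$ the polynomials introduced just before the statement; the affine change of $x$ preserves the degree of each coefficient in $x$. The freedom in $h$ is then spent on killing as much of the coefficient of $\eta$ as possible, namely the factor $B(h)$ whenever $B$ has a root.

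In cases (i), (ii), (iii) we choose $h$ to be a root of $B$, so that the coefficient of $\eta$ collapses to the constant $A(h)$. In case (i), with $\Delta \neq 0$ and $b_0 \neq 0$, the shift $\mu = -a_0/b_0$ kills the constant in the leading coefficient, and the scaling $\lambda = -b_0/B'(h)$ normalizes the coefficient of $\eta_\xi$ to match Kummer's equation $\xi\eta_{\xi\xi} + (c - \xi)\eta_\xi - a\eta = 0$, whence the formulas for $a$ and $c$. In case (ii) the double root of $B$ also annihilates $B'(h)$, so the coefficient of $\eta_\xi$ becomes the constant $A'(h)$; after the analogous shift and scaling, the substitution $\eta = \xi^{\alpha}\zeta$ together with $t = \sqrt{\xi}$ reduces the equation to Bessel's equation and yields the stated $\alpha$ and $\beta$. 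In case (iii) the linear polynomial $B$ has the unique root $h = -b_2/b_1$, and after the shift $\mu = -A'(h)/b_1$ one obtains a Weber-type equation $a_0\eta_{\xi\xi} + b_1\xi\,\eta_\xi + A(h)\eta = 0$, solved by ${}_1F_1(a;1/2;k\xi^2)$.

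Case (iv) is the exceptional one: with $b_0 = b_1 = 0$, the polynomial $B$ degenerates to the nonzero constant $b_2$ and has no root. Instead, the freedom in $h$ is used to annihilate $A'(h)$, i.e., $h = -a_1/(2a_0)$, producing the Airy-type equation $a_0\eta_{\xi\xi} + (A(h) + b_2\xi)\eta = 0$ after the indicated shift of $x$; the standard identification of its solutions with $\sqrt{\xi}\,J_{1/3}(k\xi^{3/2})$ (via $\eta = \sqrt{\xi}\,\zeta(k\xi^{3/2})$) then closes the case. Case (v) collects the remaining degeneracies, in which $B \equiv 0$ and the equation is first order in $y'$ up to an integrating factor, hence solvable by elementary means. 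The main obstacle is the bookkeeping in cases (ii) and (iv), where one additional non-affine substitution is needed to reach the named special function, and one must verify that the explicit constants stated in the theorem produce exactly the standard normalization rather than a rescaled variant.
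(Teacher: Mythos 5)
Your proposal is correct and follows exactly the route the paper intends: the paper gives no proof of this theorem, deferring to Capitolo Primo of Tricomi's book, but its introduction of $A(h)$ and $B(h)$ immediately before the statement presupposes precisely your identity $(a_0+b_0x)\eta''+(A'(h)+B'(h)x)\eta'+(A(h)+B(h)x)\eta=0$ under $y=e^{hx}\eta$, the choice of $h$ as a root of $B$ in cases (i)--(iii) (respectively as the root of $A'$ in case (iv)), and the affine normalization, and your derived constants match the theorem in cases (i)--(iii). The one thing your deferred ``verify the stated constants'' step would uncover: in case (iv) killing the constant term forces $\mu=-A(h)/b_2=(a_1^2-4a_0a_2)/(4a_0b_2)$, so the sign of $\mu$ as printed in the theorem is a typo rather than an error on your side; also, your description of case (v) should include the degenerations with $a_0=0$ (first-order equations), not only $B\equiv 0$ (constant coefficients).
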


\section{$q$-Hypergeometric functions} \label{s:qhyper}

\subsection{Basic notations} \label{s:qhyper:basic}
We fix the notation of basic hypergeometric series.  
The $q$-Pochhammer symbol is given by 
\begin{eqnarray*}
 (a;q)_0&=& 1, \\
 (a;q)_n&=& (1-a)(1-aq )\cdots(1-aq^{n-1}), \quad (n=1,2,3,...)\\
 (a;q)_\infty&=& \prod_{k=0}^\infty (1-aq^{k-1}),  \\
 (a_1,\ldots,a_r;q)_n&=&\prod_{i=1}^n(a_i;q)_n. 
\end{eqnarray*}
The basic hypergeometric series is defined by 
\begin{equation*}
 {}_r\varphi_s\left( a_1,\ldots,a_r; b_1,\ldots,b_s;q,z\right)
=\sum_{n=0}^\infty\frac{(a_1,\ldots,a_r;q)_n}{(b_1,\ldots,b_s;q)_n(q;q)_n}
\left[(-1)^nq^{\binom{n}{2}}\right]^{1+r-s}z^n.
\end{equation*}
The theta function is defined by 
$$  
\theta_q(x):=\sum^{\infty}_{n=-\infty} q^{n(n-1)/2}x^n=(q,- x, - q/x;q)_\infty,
$$
and $\theta_q(x)$ satisfies a $q$-difference equation $x \theta_q(xq)=\theta_q(x)$.

\medskip 

The theta function $\theta_q(x)$ and the $q$-Pochhammer symbol $(a;q)_\infty$
are used to transform $q$-difference equations.  The following simple lemma plays 
an important role in this paper.

\begin{lemma}\label{guage}
We take  a $q$-difference equation
$$a(x) u(x q^2) +b(x) u(qx)+c( x) u(x)=0,$$

(1) We set $u(x)=v(x)/(sx ;q)_\infty$. Then $v(x)$ satisfies the $q$-difference equation
$$(1-sqx){a(x)} v(x q^2) +   b(x) v( x q)+ \frac{c( x)}{1-sx}  v(x)=0.$$ 

(2) We set $u(x)=\theta_q(rx) w(x)$. Then $w(x)$ satisfies the $q$-difference equation
$$\frac{a(x)}{ r q}w(x q^2) + \frac{x}{r} b(x) w(x q)+ r x^2c( x) w(x)=0.$$

\end{lemma}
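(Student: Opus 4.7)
The plan is to proceed by direct substitution. In each case I would insert the prescribed expression for $u(x)$ into the three-term recurrence, use the functional equation satisfied by the gauge factor to rewrite the three shifted values as $v$ (respectively $w$) times a common multiplier, and then divide out that common multiplier.

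For part (1), the operative identity is the telescoping
$$(sx;q)_\infty = (1-sx)(sqx;q)_\infty,$$
which iterated gives $(sq^2 x;q)_\infty = (sx;q)_\infty/[(1-sx)(1-sqx)]$. Writing $u(q^k x) = v(q^k x)/(sq^k x;q)_\infty$ for $k=0,1,2$, substituting into the original equation, and multiplying through by $(sx;q)_\infty/(1-sx)$ drops the factor $(1-sx)$ from the middle term, leaves $(1-sqx)$ multiplying the first, and turns the last coefficient into $c(x)/(1-sx)$. That is exactly the stated equation for $v$.

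For part (2), the analogous identity is the paper's own $x\theta_q(xq)=\theta_q(x)$, applied twice with $x\mapsto rx$ to produce
$$\theta_q(rqx) = \frac{\theta_q(rx)}{rx}, \qquad \theta_q(rq^2 x) = \frac{\theta_q(rx)}{qr^2 x^2}.$$
Substituting $u(q^k x) = \theta_q(rq^k x)\,w(q^k x)$ and cancelling the common factor $\theta_q(rx)$ leaves a recurrence in $w$ with rational coefficients in $x$; multiplying through by a suitable monomial in $r$ and $x$ to clear these denominators and fix the stated normalization gives the displayed equation.

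No genuine obstacle arises; both parts are routine algebraic verification. The one detail worth tracking is the direction of the shift: passing from $u(x)$ to $u(qx)$ \emph{multiplies} by $(1-sx)$ in part (1), because $(sx;q)_\infty$ stands in the denominator of $u$, but \emph{divides} by $rx$ in part (2), because $\theta_q(rx)$ stands in the numerator of $u$. Once those factors are placed correctly and the final monomial rescaling in part (2) is chosen to match the coefficients as written, the verification is complete.
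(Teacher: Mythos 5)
Your method --- direct substitution combined with the functional equations of the gauge factors --- is exactly what the paper intends; in fact the paper prints no proof of this lemma at all (it calls it simple and follows it with two worked examples), so routine verification is the only route there is to compare against. Part (1) of your argument is complete and correct: the telescoping $(sx;q)_\infty=(1-sx)(sqx;q)_\infty$, its iterate $(sq^2x;q)_\infty=(sx;q)_\infty/[(1-sx)(1-sqx)]$, and the division by $(1-sx)$ reproduce the stated equation for $v$ verbatim.

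Part (2), however, has a genuine problem at precisely the step you wave through. Your theta identities are right, and after substituting and cancelling $\theta_q(rx)$ one gets
\begin{equation*}
\frac{a(x)}{qr^{2}x^{2}}\,w(xq^{2})+\frac{b(x)}{rx}\,w(xq)+c(x)\,w(x)=0 .
\end{equation*}
But ``multiplying through by a suitable monomial in $r$ and $x$'' cannot produce the displayed equation: a common monomial factor scales all three coefficients equally, and the printed coefficients do not stand in the right ratios. Multiplying by $rx^{2}$ gives
\begin{equation*}
\frac{a(x)}{qr}\,w(xq^{2})+x\,b(x)\,w(xq)+rx^{2}c(x)\,w(x)=0 ,
\end{equation*}
which matches the lemma's first and third coefficients but yields $x\,b(x)$, not $\frac{x}{r}\,b(x)$, in the middle; equivalently, the ratio of the middle to the first coefficient is $qrx\,b(x)/a(x)$ in the equation actually satisfied by $w$ and $qx\,b(x)/a(x)$ in the printed one, a discrepancy by the factor $r$ that no overall rescaling can repair. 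A quick independent check: take $a(x)\equiv 0$, so the equation is $b(x)u(xq)+c(x)u(x)=0$; the substitution gives $b(x)w(xq)+rx\,c(x)w(x)=0$, whereas the lemma as printed would give $b(x)w(xq)+r^{2}x\,c(x)w(x)=0$. So the statement itself carries a typo in the middle coefficient (the paper's own example (b) is likewise off by stray factors of $q$, which corroborates sloppiness in constants rather than a different convention). Your proof should have carried the computation to the end and either recorded the corrected coefficient $x\,b(x)$ or flagged the mismatch; as written, the phrase ``fix the stated normalization'' asserts a match with the printed formula that does not exist.
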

We show two examples:

\noindent
 (a) For  
$$a(x) u(xq^2) +b(x) u(x q)+(c+dx) u(x)=0,$$
we set $u(x)=v(x)/(-dx/c ;q)_\infty$. Then we have
$$ \frac1c (c-dqx) a(x) v(xq^2) + b(x) v(x q)+ c  v(x)=0.$$ 

\noindent
(b) We set $u(x)=\theta_q(ax) w(x)$ for  
$$ax u(xq^2) +b(x) u(x q)+c u(x)=0.$$
Then we have
$$ w(xq^2) + b(x) w(x q)+ acx  w(x)=0.$$ 

\subsection{Shearing transformations} \label{s:qhyper:shear}

In the study of differential equations,  
shearing transformations are useful to study irregular 
singular points whose   Poincar\'e rank is non-integer. 

Shearing transformations are  also useful for $q$-differential 
equations when a slope of the Newton diagram is non-integer. 

\begin{definition}
For a $q$-difference equation 
$$a(x)u(xq^2) +b(x) u(xq)+c(x) u(x)=0,$$
 a {\it shearing transformation} is the following transformation:
$$x=t^2, \ p=\sqrt{q}, \ v(t)= u(x).$$ 
The shearing transform of the $q$-difference equation is given by
$$   a(t^2)v(tp^2) +b(t^2) v(tp)+c(t^2) v(t)=0.$$  
\end{definition}

We will show an example of a shearing transformation. 
We take a  $q$-linear equation 
 \begin{equation}\label{shear1}
(a x +b)  u(xq^2) +c u(xq)+ d u(x)=0.
\end{equation}
Here $a\not=0,b, c, d\not=0$ are constants. 
The Newton diagram of \eqref{shear1} at $x=\infty$ has a slope $1/2$. 
The shearing transform of \eqref{shear1} is
$$   (a t^2 +b)v(tp^2) +c  v(tp)+d v(t)=0.$$  
If $b\not= 0$, we set $w(t)= (p^{-1} \sqrt{-b/a}; p)_\infty v(t)$. Then $w(t)$ satisfies 
$$   a(t+ \sqrt{-b/a})w(tp^2) +c  w(tp)+ d ( t- p^{-1} \sqrt{-b/a}) w(t)=0.$$  
If $b=0$, we set $w(t)= \theta_p(t) v(t)$, as shown in subsection \ref{s:qhyper:basic}.  Then $w(t)$ satisfies 
$$   at w(tp^2) +c  w(tp)+ d t w(t)=0.$$  
In any case,  $w(t)$ has a regular singularity at  $t=\infty$ if $ad\not=0$.

\subsection{$q$-Bessel functions and $q$-Airy functions} \label{s:qhyper:bessel}
It is known that there exist three types of $q$-Bessel functions and two 
types of $q$-Airy functions.  It is not clear that relations between such functions. 
It is known that $ J^{(1)}_\nu (x;q) $ and $J^{(2)}_\nu (x;q)$ are 
essentially equivalent  and $J^{(3)}_\nu (x;q)$ is 
essentially different.   We show that two types of $q$-Airy functions 
are related by a shearing transformation and ${\rm Ai}_q(z)$ is a 
special case of $J^{(3)}_\nu (x;q)$. 

\medskip
\noindent 
1) $q$-Bessel functions:

\noindent 
It is known that there exist three types of $q$-Bessel functions 
 $J^{(1)}_\nu (x;q) $,  $J^{(2)}_\nu (x;q) $ and $J^{(3)}_\nu (x;q)$.  
 In most of literatures,   $J^{(1)}_\nu (x;q) $ and $J^{(2)}_\nu (x;q)$ are 
 called Jackson's first and second $q$-Bessel functions, and $J^{(3)}_\nu (x;q)$  is 
 called Hahn-Exton's  $q$-Bessel function.

The three $q$-Bessel functions are defined as 
\begin{eqnarray*}
 & J^{(1)}_\nu (x;q) =& \frac{(q^{\nu+1};q)_\infty}{(q ;q)_\infty} \left( 
\frac{x}{2}\right)^\nu \ {}_2\varphi_1\left( 0,0; q^{\nu+1};q, - \frac{x^2}4\right),\\
&J^{(2)}_\nu (x;q) =& \frac{(q^{\nu+1};q)_\infty}{(q ;q)_\infty} \left( 
\frac{x}{2}\right)^\nu \ {}_0\varphi_1\left( -; q^{\nu+1};q, - \frac{q^{\nu+1}x^2}4\right),\\
&J^{(3)}_\nu (x;q)  =& \frac{(q^{\nu+1};q)_\infty}{(q ;q)_\infty}\,    
x^\nu \, {}_1\varphi_1\left( 0; q^{\nu+1};q,  {q}x^2  \right). 
\end{eqnarray*}
The three $q$-Bessel functions satisfy the following 
$q$-difference equations:
\begin{eqnarray*}
& J^{(1)}_\nu: &  u(xq) -(q^{\nu/2} +q^{-\nu/2}) u(xq^{1/2}) +\left(1+\frac{x^2}4\right)u(x)=0,\\
& J^{(2)}_\nu: & \left(1+\frac{qx^2}4\right)u(xq) -(q^{\nu/2} +q^{-\nu/2}) u(xq^{1/2}) +u(x)=0,\\
& J^{(3)}_\nu: & u(xq) +[-(q^{\nu/2} +q^{-\nu/2}) +q^{\nu/2+1}x^2] u(xq^{1/2}) + u(x)=0. 
\end{eqnarray*}
It is convenient to take the inverse shearing transform of 
equations above.  We set $p=q^{1/2}$,  $t=x^2$ and $v(t)=u(x)$.  Then we have 
\begin{eqnarray*}
& J^{\prime(1)}_\nu: &  v(tp^2) -(p^{\nu} +p^{-\nu}) v(tp) +\left(1+\frac{t}4\right)v(t)=0,\\
& J^{\prime(2)}_\nu: & \left(1+\frac{p^2t}4\right)v(tp^2) -(p^{\nu} +p^{-\nu}) v(tp) +v(t)=0,\\
& J^{\prime(3)}_\nu: & v(tp^2) +[-(p^{\nu} +p^{-\nu}) +p^{\nu+2}t] v(tp) + v(t)=0. 
\end{eqnarray*}
In the following we set
\begin{eqnarray*}
& E^{(1)}_\nu(x;p)  & = t^\nu {} {}_2\varphi_1\left( 0,0; p^{2\nu+1};p, -  {x}/4\right),\\
& E^{(2)}_\nu(x;p)  & = t^\nu {} {}_0\varphi_1\left( -  ; p^{2\nu+1};p, -  {x} p^{2\nu+2}/4\right),\\
& E^{(3)}_\nu(x;p)  & = t^\nu {} {}_1\varphi_1\left( 0 ; p^{2\nu+1};p,   {x} p^{2}/4\right). 
\end{eqnarray*}
$E^{(1)}_\nu(x;p)$ and $E^{(1)}_{-\nu}(x;p) $  are solutions of the equation $J^{\prime(1)}_\nu$. 
$E^{(2)}_\nu(x;p)$ and $E^{(2)}_{-\nu}(x;p) $  are solutions of the equation $J^{\prime(2)}_\nu$. 
$E^{(3)}_\nu(x;p)$ and $E^{(3)}_{-\nu}(xp^{-2\nu};p) $  are solutions of the equation $J^{\prime(3)}_\nu$. 
We use the notation $E^{(j)}_\nu(x;p)$ since Tricomi used $E_\nu(x)$ in his study in the 
Bessel functions  (see subsection \ref{s:two:conf}). 

By Hahn's   formula
\begin{equation}\label{hahn}
J_\nu^{(2)}(x;q)= (-\frac{x^2}4;q)_\infty \cdot J_\nu^{(1)}(x;q), \quad  
E^{(2)}_\nu(x;p)= (-\frac{t}4;p^2)_\infty E^{(1)}_\nu(x;p)
\end{equation}
In this sense $J_\nu^{(1)}$ and $J_\nu^{(2)}$ are equivalent. 
Fitouhi and Hamza \cite{Fitouhi-Hamza} have defined another $q$-Bessel function 
$j_\alpha(x, q^2)$, which is essentially 
equivalent to  $J_\nu^{(3)}(x;q)$.

\bigskip 
\noindent
2) $q$-Airy functions

\noindent
It is known that there exist 
two different types of the $q$-Airy functions.  The $q$-Airy function 
\textrm{Ai}$_q(x)$ is found in 
the study of special solutions of the second $q$-Painlev\'e equation
\cite{KMNOY}.   The limit $q\to 1$ tends to the Airy function at least 
around the infinity \cite{HKW}.  The second function   $\textrm{A}_q(x)$, is called the {\it Ramanujan} function,
which  is found by Ismail in the study of 
asymptotic behavior of the $q$-Hermite polynomial \cite{Ismail}. 
\begin{eqnarray*}
\textrm{Ai}_q(x)&=& {}_1\varphi_1(0;-q;q,-x), \\
\textrm{A}_q(x)&=& {}_0\varphi_1(-;0;q,-qx).
\end{eqnarray*}
It was not known any  relation between the $q$-Airy function 
 \textrm{Ai}$_q(x)$ and  the {  Ramanujan} function $\textrm{A}_q(x)$, but recently Morita has found 
a connection formula between \textrm{Ai}$_q(x)$ and $\textrm{A}_q(x)$ 
(see \eqref{morita} in subsection \ref{s:class:c}).

The Airy function is a special case of the modified Bessel function:
$$\textrm{Ai}(x)=\frac1\pi \sqrt{\frac{x}3}\,  K_{1/3}\left(\frac{2}3x^{3/2}\right).$$ 
As the same as the differential case, 
the $q$-Airy function {Ai}$_q(x)$ is related to $J^{(3)}_\nu (x;q)$.

\begin{lemma}  
If $q^\nu=-1$, 
we have 
$$J^{(3)}_\nu (x;q)  
= \frac{(-q ;q)_\infty}{(q ;q)_\infty}   
x^\nu \, {} \mathrm {Ai}_q(-q x^2).$$
\end{lemma}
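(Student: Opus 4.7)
The plan is to prove the identity by direct substitution into the series definitions, with essentially no hidden work — the claim is really just saying that the parameter specialization $q^\nu = -1$ in the defining ${}_1\varphi_1$ of $J^{(3)}_\nu$ produces exactly the ${}_1\varphi_1$ that defines $\mathrm{Ai}_q$.

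First I would recall from subsection \ref{s:qhyper:bessel} the definition
\[
J^{(3)}_\nu(x;q) = \frac{(q^{\nu+1};q)_\infty}{(q;q)_\infty}\, x^\nu\, {}_1\varphi_1\!\left(0;\,q^{\nu+1};\,q,\,qx^2\right),
\]
and the definition $\mathrm{Ai}_q(z) = {}_1\varphi_1(0;-q;q,-z)$. Now impose $q^\nu = -1$. Then $q^{\nu+1} = -q$, so the prefactor becomes $(-q;q)_\infty/(q;q)_\infty$ and the basic hypergeometric series becomes ${}_1\varphi_1(0;-q;q,qx^2)$.

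Next I would match the argument of $\mathrm{Ai}_q$. By its definition, $\mathrm{Ai}_q(-qx^2) = {}_1\varphi_1(0;-q;q,-(-qx^2)) = {}_1\varphi_1(0;-q;q,qx^2)$, which is precisely what appears in $J^{(3)}_\nu(x;q)$ after the specialization. Assembling these gives
\[
J^{(3)}_\nu(x;q) = \frac{(-q;q)_\infty}{(q;q)_\infty}\, x^\nu\, \mathrm{Ai}_q(-qx^2),
\]
as claimed.

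There is really no main obstacle: the lemma is a tautological rewriting once one observes the parameter match $q^{\nu+1}=-q$ and the sign match in the argument. The only thing worth flagging is that $q^\nu = -1$ forces $\nu$ to be a specific (complex) value determined by $q$, so the factor $x^\nu$ on the right is understood with this branch; this should be mentioned in a single sentence for clarity but requires no computation.
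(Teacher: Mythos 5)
Your proof is correct and matches the paper's intent exactly: the paper states this lemma without further argument, treating it as the immediate substitution $q^{\nu+1}=-q$ into the definitions of $J^{(3)}_\nu$ and $\mathrm{Ai}_q(z)={}_1\varphi_1(0;-q;q,-z)$, which is precisely your computation. Your added remark about the branch of $x^\nu$ when $q^\nu=-1$ is a sensible clarification the paper omits.
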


The $q$-Airy function $\textrm{Ai}_q(x)$ satisfies a $q$-difference equation 
\begin{equation}\label{q-a1}
u(xq^2)+x u(xq)- u(x)=0,
\end{equation}
and $ \textrm{A}_q(x)$  satisfies a $q$-difference equation 
\begin{equation}\label{q-a2}
qx u(xq^2)-  u(xq)+  u(x)=0.
\end{equation}
These two equation connected by shearing transformation as we has seen in subsection \ref{s:qhyper:shear}. 

\begin{lemma}  \label{airy-ram}
If $u(x)$ satisfies \eqref{q-a1}, 
$$v(x)=\theta_q(-q^2 x) \mathrm {Ai}_q(1/x)$$
satisfies an inverse shearing transform of a modified equation of \eqref{q-a1}:
$$-q^5  x^2 v(xq^2)-  v(xq)+  v(x)=0,$$
which is  is solved by $ \mathrm{A}_{q^2}(x^2q^3)$. 
\end{lemma}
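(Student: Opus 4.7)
The plan has three steps: an inversion of variable turning \eqref{q-a1} into an equation for $U(x) := \mathrm{Ai}_q(1/x)$, a $\theta$-gauge producing the equation for $v$, and a direct verification that $\mathrm{A}_{q^2}(x^2 q^3)$ satisfies the resulting equation. Starting from \eqref{q-a1} written in the form $u(yq^2) + y\,u(yq) - u(y) = 0$ and substituting $y = 1/x$, then shifting $x \to xq^2$ and clearing denominators, I would obtain the equation
\[
-xq^2\, U(xq^2) + U(xq) + xq^2\, U(x) = 0
\]
for $U$. The coefficients remain linear in $x$, so the substitution does not change the Newton-diagram structure; it only relocates singularities from $0$ and $\infty$.

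Next I would apply the gauge $v(x) = \theta_q(-q^2 x)\, U(x)$ (Lemma \ref{guage}(2) with $r = -q^2$). The defining relation $y\theta_q(yq) = \theta_q(y)$ iterated at $y = -q^2 x$ yields
\[
\theta_q(-q^3 x) = \frac{\theta_q(-q^2 x)}{-q^2 x}, \qquad \theta_q(-q^4 x) = \frac{\theta_q(-q^2 x)}{q^5 x^2},
\]
so writing $U(xq^k) = v(xq^k)/\theta_q(-q^{k+2} x)$ for $k=0,1,2$, substituting into the equation from Step~1, and clearing the common factors $\theta_q(-q^2 x)$ and $xq^2$ should produce exactly
\[
-q^5 x^2\, v(xq^2) - v(xq) + v(x) = 0.
\]
The quadratic factor $x^2$ in the leading coefficient is produced precisely by the double application of the $\theta_q$-identity, and is the structural fingerprint of the ``inverse shearing transform'' mentioned in the statement.

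Finally, to verify that $\mathrm{A}_{q^2}(x^2 q^3)$ solves this equation, I would apply \eqref{q-a2} with $q$ replaced by $q^2$: the Ramanujan function $\phi := \mathrm{A}_{q^2}$ satisfies $q^2 \xi\, \phi(\xi q^4) - \phi(\xi q^2) + \phi(\xi) = 0$. Setting $\xi = x^2 q^3$ and $z(x) := \phi(\xi)$, the dilations $z(xq^k) = \phi(\xi q^{2k})$ translate the Ramanujan equation into a second-order $q$-difference equation in $z$ whose leading coefficient is $q^2 \xi = q^5 x^2$, which matches the target (up to overall sign, reconciled via the sign of the gauge $r = -q^2$ chosen in Step~2). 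The main obstacle is bookkeeping of the signs and $q$-powers through the three substitutions: the $-q^5 x^2$ on the leading term of the target must be tracked through the product of two $\theta_q$-shifts and then matched consistently against the $q^2 \xi$ coming out of the Ramanujan substitution $\xi = x^2 q^3$. Once the two equations coincide, the lemma follows, and simultaneously explains why a function of $x^2$ (like $\mathrm{A}_{q^2}(x^2 q^3)$) can solve the same equation as the gauge-transformed $\mathrm{Ai}_q(1/x)$.
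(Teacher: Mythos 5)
Your Steps 1 and 2 are correct, and they constitute exactly the direct verification the paper has in mind (the paper offers no details beyond ``we can check out the lemma above directly''). The inversion at $y=1/(xq^2)$ does give $-xq^2\,U(xq^2)+U(xq)+xq^2\,U(x)=0$ for $U(x)=\mathrm{Ai}_q(1/x)$, your two iterated theta identities are right, and the gauge $v=\theta_q(-q^2x)U$ then yields precisely $-q^5x^2\,v(xq^2)-v(xq)+v(x)=0$ after dividing by the common factor $xq^2\,\theta_q(-q^2x)^{-1}$. So the first assertion of the lemma is fully proved by your argument.

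The gap is in Step 3, and your parenthetical ``up to overall sign, reconciled via the sign of the gauge $r=-q^2$'' does not repair it. Carrying out your own substitution to the end: with $\phi=\mathrm{A}_{q^2}$ satisfying $q^2\xi\,\phi(\xi q^4)-\phi(\xi q^2)+\phi(\xi)=0$ and $\xi=x^2q^3$, the function $z(x)=\phi(x^2q^3)$ satisfies
\[
+\,q^5x^2\,z(xq^2)-z(xq)+z(x)=0,
\]
whose leading coefficient has the \emph{opposite} sign to the target. This is not an overall sign discrepancy (that could be scaled away); the sign patterns of the three coefficients are $(+,-,+)$ versus $(-,-,+)$, so the two equations are genuinely different. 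Nor can the gauge sign absorb it: $r=-q^2$ was already spent in Step 2 to produce the target, and switching to $r=+q^2$ changes the target to $-q^5x^2\,v(xq^2)+v(xq)+v(x)=0$, which still does not match --- no choice of sign in $r$ reconciles the two, because the relative sign of the $v(xq)$ and $v(x)$ terms is wrong. The correct resolution is to take $\xi=-x^2q^3$, so that $q^2\xi=-q^5x^2$ and $\mathrm{A}_{q^2}(-x^2q^3)$ solves the displayed equation; the lemma as printed carries a sign slip in the argument of $\mathrm{A}_{q^2}$, which is corroborated by Morita's connection formula \eqref{morita}, where the argument $-q^3/x^2$ appears with exactly this minus sign. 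Your proof becomes complete once Step 3 is restated with $\xi=-x^2q^3$ and the false reconciliation sentence is deleted; as written, the final matching step would fail.
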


We can check out the lemma above directly.

\section{Classification} \label{s:class}

We classify a $q$-difference equation of the hypergeometric type:
\begin{equation} \label{qhyper}
(a_0 +b_0 x) u(xq^2)  + (a_1 +b_1 x) u(xq) +(a_2 +b_2 x)u(x)=0.
\end{equation}
The equation above has transformations which keep the hypergeometric type. 
Such transformations ware known by Hahn \cite{Hahn}.

\subsection{A $q$-analogue of the Riemann scheme}  \label{s:class:R}
Before we list up transformations which keep the hypergeometric type, 
we introduce Matsumoto's $q$-analogue of the Riemann scheme  \cite{Mat}.

\begin{definition}
For \eqref{qhyper}, we set two characteristic polynomials 
$a_0\mu^{2}+a_1\mu+a_2=0$ and $b_0+b_1\lambda+b_2\lambda^{2}=0$
The first one is a characteristic polynomial around $x=0$ and 
the second one is a characteristic polynomial around
 $x=\infty$.  The roots of both polynomials are called 
 characteristic exponents $\mu_1, \mu_2, \lambda_1, \lambda_2$. 
The  characteristic exponents is considered as $\infty$ when 
$a_0=0$ or $b_2=0$.  We set $\rho_{1}=- {a_0}/{b_0},\quad  \rho_{2}=- {b_2}/{a_2}$, 
which are {\it virtual}\  exponents. 
\end{definition}

It is easily checked out that $\rho_{1}\rho_{2}\lambda_{1}\lambda_{2}\mu_{1}\mu_{2}=1$, 
which is a $q$-analogue of Fuchs' relation for Fuchsian differential equations. 
If all of exponents are not zero, \eqref{qhyper} is written only by exponents:
\begin{eqnarray*}
\lambda_{1}\lambda_{2}(x-\rho_{1})u(x q^{2})-\{(\lambda_{1}+\lambda_{2})x-\lambda_{1}\lambda_{2}\rho_{1}(\mu_{1}+\mu_{2})\}u(x q) +(x-\lambda_{1}\lambda_{2}\mu_{1}\mu_{2}\rho_{1})u(x)=0, 
\end{eqnarray*}
which is a $q$-analogue of Papperitz's differential equation \cite{Papperitz}. 

\begin{definition}
For \eqref{qhyper}, we set a $q$-analogue of the Riemann scheme: 
\begin{equation*}
\Phi\left\{
\begin{matrix}
0 & \infty & * \\
\mu_1 & \lambda_1 & \rho_1 &; \quad  x\\
\mu_2 & \lambda_2 & \rho_2
\end{matrix}
\right\}.
\end{equation*}
\end{definition}

The Riemann scheme represents a space of solutions.  
A $q$-analogue of the Riemann scheme has already shown by Hahn \cite{Hahn}, but
Hahn's Riemann scheme is just a table setting out coefficients $a_j, b_k$.  
 Since exponents is essential in differential or difference equations, 
 we array exponents in the scheme as the same as the original Riemann scheme.

\par \bigskip 

We study transformations which keep   hypergeometric type.  We consider two difference equations which are transformed by such transformations are equivalent.

\begin{theorem} \label{trans}   The following four transformations on $q$-difference equations
 keep the hypergeometric type. 
(1) $x \to c x$, \ (2)  ${u \to x^\gamma u}$, \ 
(3) $x \to 1/x$, \ (4) $u \to  (\rho_2x; q)_\infty/(x/\rho_1 q; q)_\infty u(x)$.
By these transformations, the Riemann scheme is transformed as follows:

(1) $x \to c x$: 
\begin{equation*}
\Phi\left\{
\begin{matrix}
0 & \infty & * \\
\mu_1 & \lambda_1 & \rho_1 &; \quad  c x\\
\mu_2 & \lambda_2 & \rho_2
\end{matrix}\right\}=
\Phi\left\{
\begin{matrix}
0 & \infty & * \\
\mu_1 & \lambda_1 & {\rho_1}/c &; \quad  x\\
\mu_2 & \lambda_2 & c\rho_2 
\end{matrix}
\right\}
\end{equation*}

(2) ${u \to x^\gamma u}$ for  $ c=  q^\gamma$: 
\begin{equation*}
\Phi\left\{
\begin{matrix}
0 & \infty & * \\
\mu_1 & \lambda_1 & \rho_1 &; \quad  x\\
\mu_2 & \lambda_2 & \rho_2
\end{matrix}\right\}
=x^\gamma
\Phi\left\{
\begin{matrix}
0 & \infty & * \\
{\mu_1}/c & c \lambda_1 & \rho_1 &; \quad  x\\
{\mu_1}/c & c \lambda_2 & \rho_2
\end{matrix}\right\}
\end{equation*}

(3) $x \to 1/x$: 
\begin{eqnarray*}
\Phi\left\{
\begin{matrix}
0 & \infty & * \\
\mu_1 & \lambda_1 & \rho_1 &; \quad  \dfrac{1}{x}\\
\mu_2 & \lambda_2 & \rho_2
\end{matrix}\right\}
&=& \Phi\left\{
\begin{matrix}
0 & \infty & * \\
\lambda_1 & \mu_1 & \rho_2 &; \quad  q^2x\\
\lambda_2 & \mu_2 & \rho_1
\end{matrix}\right\}  \\
&=& \Phi\left\{
\begin{matrix}
0 & \infty & * \\
\lambda_1 & \mu_1 & {\rho_2}q^{-2} &; \quad  x\\
\lambda_2 & \mu_2 & \rho_1q^2
\end{matrix}\right\}
\end{eqnarray*}

(4) $u \to  (\rho_2x; q)_\infty/(x/\rho_1 q; q)_\infty u$:
\begin{equation*}
\Phi\left\{
\begin{matrix}
0 & \infty & * \\
\mu_1 & \lambda_1 & \rho_1 &; \quad  x\\
\mu_2 & \lambda_2 & \rho_2
\end{matrix}\right\}
=\frac{(x/\rho_1 q; q)_\infty}{(\rho_2x; q)_\infty}
\Phi\left\{
\begin{matrix}
0 & \infty & * \\
\mu_1 & {q}/{\mu_1\mu_2\lambda_1} & {1}/{\rho_2 q} &; \quad  x\\
\mu_2 & {q}/{\mu_1\mu_2\lambda_2} & {1}/{\rho_1 q}
\end{matrix}\right\}
\end{equation*}
\end{theorem}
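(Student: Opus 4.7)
The plan is to verify each of the four transformations in turn by explicitly computing the transformed $q$-difference equation, then reading off its characteristic polynomials at $0$ and $\infty$ together with the virtual exponents $\rho_i$. For each case I need to check two things: (a) the transformed equation remains of hypergeometric type, i.e., its coefficients are affine in $x$, and (b) the new exponents match the claimed Riemann scheme. The Fuchs-type relation $\rho_1\rho_2\mu_1\mu_2\lambda_1\lambda_2 = 1$ recorded after Definition~4.1 will be the key identity for recognizing the new exponents in (3) and (4).

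Transformations (1) and (2) are essentially bookkeeping. Setting $v(x) = u(cx)$ in (1) leaves the polynomials $a_0 \mu^2 + a_1 \mu + a_2$ and $b_0 + b_1 \lambda + b_2 \lambda^2$ unchanged but multiplies the $b_j$ by $c$, so $\mu_i, \lambda_i$ are unchanged while $\rho_1, \rho_2$ rescale by $1/c, c$. For (2), writing $u(x) = x^\gamma v(x)$ introduces factors $q^{2\gamma}, q^\gamma, 1 = c^2, c, 1$ in the three shifted terms, so the characteristic polynomial at $0$ becomes $a_0 c^2 \mu^2 + a_1 c \mu + a_2$ (roots $\mu_i/c$) and the one at $\infty$ becomes $b_0 c^2 + b_1 c \lambda + b_2 \lambda^2$ (roots $c\lambda_i$), while the virtual exponents are invariant. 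Transformation (3) is the substitution $v(x) = u(1/x)$, followed by multiplying through by $x$ to clear the $1/x$ in the coefficients and then replacing $x$ by $xq^2$ to restore forward shifts; the resulting polynomials are $b_j + a_j q^2 x$, whose characteristic polynomials swap the roles of $\mu_i$ and $\lambda_i$ and shift $\rho_1 \mapsto \rho_2 q^{-2}$, $\rho_2 \mapsto \rho_1 q^2$. The second equality in (3) is just (1) applied with $c = q^2$.

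The main obstacle is transformation (4), which I would handle in two stages using Lemma~3.2. First apply $u = v/(\rho_2 x; q)_\infty$: since $\rho_2 = -b_2/a_2$ gives $a_2 + b_2 x = a_2(1 - \rho_2 x)$, the factor $1 - sx$ in Lemma~3.2(1) cancels the constant-term polynomial, leaving
\begin{equation*}
(1 - \rho_2 q x)(a_0 + b_0 x)\, v(xq^2) + (a_1 + b_1 x)\, v(xq) + a_2\, v(x) = 0,
\end{equation*}
which is not yet hypergeometric type because the $v(xq^2)$ coefficient now has degree $2$. Next set $v = (x/(\rho_1 q); q)_\infty\, \tilde u$; clearing poles introduces compensating factors $(1 - x/(\rho_1 q))(1 - x/\rho_1)$ in the $\tilde u(xq^2)$ term and $(1 - x/\rho_1)$ in the $\tilde u(xq)$ term. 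The crucial cancellation is that $a_0 + b_0 x = -b_0\rho_1(1 - x/\rho_1)$, so $(1 - x/\rho_1)$ divides every term and can be dropped. The resulting equation
\begin{equation*}
-b_0\rho_1 (1 - \rho_2 q x)\, \tilde u(xq^2) + (a_1 + b_1 x)\, \tilde u(xq) + a_2\bigl(1 - x/(\rho_1 q)\bigr)\, \tilde u(x) = 0
\end{equation*}
is genuinely of hypergeometric type.

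Finally I would read off the new Riemann scheme. The characteristic polynomial at $0$ is unchanged, so $\tilde\mu_i = \mu_i$; the new $\rho$'s are $1/(\rho_2 q)$ and $1/(\rho_1 q)$ by direct inspection. For the new $\lambda_i$ the characteristic polynomial at $\infty$ is $-a_0 \rho_2 q + b_1 \lambda - (a_2/(\rho_1 q))\lambda^2 = 0$; expressing its sum and product of roots in terms of $\mu_i, \lambda_i$ via $a_0/a_2 = 1/(\mu_1\mu_2)$, $b_1/b_2 = -(\lambda_1+\lambda_2)$, and the Fuchs relation $\rho_1\rho_2 = 1/(\mu_1\mu_2\lambda_1\lambda_2)$, one recognizes the roots as $q/(\mu_1\mu_2\lambda_1)$ and $q/(\mu_1\mu_2\lambda_2)$. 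The only real subtlety is tracking the composition of the two gauges and confirming the divisibility by $(1 - x/\rho_1)$; everything else is routine algebra.
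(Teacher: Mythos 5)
Your verification is correct and takes essentially the approach the paper intends: Theorem~\ref{trans} is stated there without an explicit proof, being exactly the routine computation you carry out, with Lemma~\ref{guage} supplying the gauge steps in case (4) just as you use it, and the Fuchs relation $\rho_1\rho_2\mu_1\mu_2\lambda_1\lambda_2=1$ identifying the new $\lambda$'s. One harmless slip: after the second gauge the compensating factors $(1-x/(\rho_1 q))(1-x/\rho_1)$ and $(1-x/\rho_1)$ attach to the $\tilde u(x)$ and $\tilde u(xq)$ terms respectively, not to $\tilde u(xq^2)$ and $\tilde u(xq)$ as your prose says --- your displayed final equation is nevertheless the correct one, so the $(1-x/\rho_1)$-divisibility argument and the exponent computation go through as written.
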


\bigskip 
\noindent
{\it Remark.} 
If we apply the transformation  (4) to the basic hypergeometric series 
$_2\varphi_1(a,b;c;q,x)$, we obtain Heine's transformation
\begin{equation*}\label{heine2}
_2\varphi_1(a,b;c;q,x)=\frac{(abx/c; q)_\infty}{(x; q)_\infty}{}_2\varphi_1\left(\frac{c}{a},\frac{c}{b};c;q,\frac{ab}{c}x\right).
\end{equation*}
We consider the transformations not only for individual solutions but also 
for  $q$-difference equations. 
Hahn's   formula \eqref{hahn} for the $q$-Bessel function is 
 essentially  the same as the transformation (4).  When some of the exponents 
 are zero or the infinity, we take $\theta_q(sx)$ instead of $(sx; q)_\infty$ 
 in (4) as we have seen on Lemma \ref{guage}. 

By means of transformations in Theorem \ref{trans}, we easily obtain a part of 
$q$-analogue of Kummer's twenty-four solutions of the hypergeometric equations. 
\begin{proposition}  The $q$-difference equation
 \begin{equation*}\label{qhg}
(c-abq x) u(xq^2)-[c+q-(a+b)qx]u(qx)+q(1-x) u(x)=0. 
\end{equation*} 
for the basic hypergeometric series   $_2\varphi_1(a,b;c;q,x)$ have the 
following eight solutions:
 \begin{eqnarray*}
& 
_2\varphi_1(a,b;c;q,x), 
& \dfrac{(abx/c;q)_\infty}{(x;q)_\infty}{_2\varphi_1}\left(\dfrac{c}{a},\dfrac{c}{b};c;q,\frac{ab}{c}x\right), \\
& x^{1-\gamma}{_2\varphi_1}\left(\dfrac{aq}{c},\dfrac{bq}{c};\dfrac{q^2}{c};q,x\right),&
x^{1-\gamma}\dfrac{(abx/c;q)_\infty}{(x;q)_\infty}{_2\varphi_1}\left(\frac{q}{a},\frac{q}{b};\frac{q^2}{c};q,\dfrac{ab}{c}x\right), \\ 
& x^{-\alpha}{_2\varphi_1}\left(a,\dfrac{aq}{c};\dfrac{aq}{b};q,\dfrac{cq}{abx}\right),& 
x^{-\alpha}\dfrac{(q/x;q)_\infty}{(cq/abx)_\infty}{_2\varphi_1}\left(\frac{q}{b},\frac{c}{b};\dfrac{aq}{b};q,\dfrac{q}{x}\right), \\
& x^{-\beta}{_2\varphi_1}\left(b,\dfrac{bq}{c};\dfrac{bq}{a};q,\dfrac{cq}{abx}\right),&  x^{-\beta}\dfrac{(q/x;q)_\infty}{(cq/abx;q)_\infty}{_2\varphi_1}\left(\frac{q}{a},\frac{c}{a};\frac{bq}{a};q,\frac{q}{x}\right).
\end{eqnarray*}
Here $a=q^\alpha, b=q^\beta$ and $c=q^\gamma.$
These solutions are $q$-analogue of a part of Kummer's twenty-four solutions \cite{Hahn, Mat} and tends to
  Kummer's original twenty-four solutions when $q\to 1$.
\end{proposition}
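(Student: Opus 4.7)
The plan is to apply Theorem \ref{trans} systematically, starting from the known solution ${}_2\varphi_1(a,b;c;q,x)$ and producing the other seven by appropriate compositions of the four transformations (1)--(4). First I would compute the $q$-Riemann scheme of the equation by matching coefficients: the characteristic polynomial at $0$ is $c\mu^{2}-(c+q)\mu+q=0$, with roots $\mu_{1}=1$, $\mu_{2}=q/c=q^{1-\gamma}$; the characteristic polynomial at $\infty$ is $\lambda^{2}-(a+b)\lambda+ab=0$, with roots $\lambda_{1}=a=q^{\alpha}$, $\lambda_{2}=b=q^{\beta}$; and the virtual exponents are $\rho_{1}=c/(abq)$, $\rho_{2}=1$. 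One checks $\rho_{1}\rho_{2}\lambda_{1}\lambda_{2}\mu_{1}\mu_{2}=1$, so the scheme is consistent.

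Next, the first solution ${}_{2}\varphi_{1}(a,b;c;q,x)$ corresponds to the exponent $\mu_{1}=1$ at $0$. Applying Theorem \ref{trans}(4) with $\rho_{1}=c/(abq)$ and $\rho_{2}=1$ produces the prefactor $(abx/c;q)_{\infty}/(x;q)_{\infty}$ and swaps the exponents at $\infty$ to $q/(\mu_{1}\mu_{2}\lambda_{j})=c/a,\,c/b$, yielding the second solution (this is Heine's transformation). To obtain the solutions with exponent $\mu_{2}=q^{1-\gamma}$ at $0$, I would apply Theorem \ref{trans}(2) to the original equation with $c=q^{\gamma-1}$, i.e.\ substitute $u\mapsto x^{1-\gamma}u$; this multiplies $\mu_{i}$ by $q^{\gamma-1}$ and $\lambda_{i}$ by $q^{1-\gamma}$, turning the equation into one for ${}_{2}\varphi_{1}(aq/c,bq/c;q^{2}/c;q,x)$, giving the third solution. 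Applying (4) once more produces the fourth.

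For the four solutions based at $\infty$, I would first apply Theorem \ref{trans}(3) to exchange the roles of $0$ and $\infty$. Combining this with Theorem \ref{trans}(2), the factor $x^{-\alpha}$ (resp.\ $x^{-\beta}$) extracts the exponent $\lambda_{1}=a$ (resp.\ $\lambda_{2}=b$), and the variable becomes a multiple of $1/x$; tracking the Riemann scheme through the composition $(3)\circ(2)$ yields precisely ${}_{2}\varphi_{1}(a,aq/c;aq/b;q,cq/(abx))$ and its $\beta$-analogue, and another application of (4) (with the appropriate $\theta_{q}$-version, as in the remark after Theorem \ref{trans}) produces the Heine-transformed partners with prefactor $(q/x;q)_{\infty}/(cq/(abx);q)_{\infty}$. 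This gives solutions 5--8.

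The main obstacle is the bookkeeping: careful tracking of how the six parameters $(\mu_{1},\mu_{2},\lambda_{1},\lambda_{2},\rho_{1},\rho_{2})$ transform under each composite operation, and verifying that the resulting hypergeometric series match the expressions displayed in the proposition (including the exact form of the $q$-Pochhammer prefactors, which requires the conventions in Lemma \ref{guage} when an exponent vanishes or becomes infinite). Once the matching is complete, the $q\to 1$ limit follows from the standard asymptotics $(q^{s}x;q)_{\infty}/(q^{t}x;q)_{\infty}\to(1-x)^{t-s}$ and $(q^{\alpha};q)_{n}/(q;q)_{n}\to(\alpha)_{n}/n!$, identifying the eight solutions with a subset of Kummer's twenty-four solutions of the Gauss hypergeometric equation.
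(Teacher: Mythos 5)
Your proposal is correct and takes essentially the same approach as the paper: the paper's entire proof is the one-line observation that all eight solutions follow by applying the transformations of Theorem \ref{trans} to ${}_2\varphi_1(a,b;c;q,x)$, and your plan carries out exactly that, filling in the Riemann-scheme bookkeeping (your computed exponents $\mu_i=1,q/c$, $\lambda_i=a,b$, $\rho_1=c/(abq)$, $\rho_2=1$ match the scheme the paper records for case (1)) that the paper leaves implicit.
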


\begin{proof} We can easily obtained all of eight solutions applying 
the transformations in Theorem \ref{trans} to $_2\varphi_1(a,b;c;q,x)$. 
\end{proof}

\noindent 
{\it Remark.}  A $q$-analogue of Kummer's twenty-four solutions are 
considered by Hahn \cite{Hahn}.  
Other sixteen solutions are not represented by $_2\varphi_1$.  We need
$_2\varphi_2$ or $_3\varphi_2$ to represent a $q$-analogue of Kummer's twenty-four solutions.

\subsection{Classification of $q$-difference equations} \label{s:class:c}
By the transformations in Theorem \ref{trans}, we can classify all of 
$q$-difference equations \eqref{qhyper}  of the hypergeometric type. 
This classification gives a coalescent diagram of $q$-special functions.

\begin{theorem}\label{class}  A $q$-difference equation \eqref{qhyper}  of the hypergeometric type 
reduces to one of the following equation by transforms in theorem \ref{trans}. 

{1)} When $a_1a_3  b_1 b_3 \not=0$, { Heine's hypergeometric ${}_2\varphi_1(a,b ; c; q,x)$}:
 \begin{equation*} 
(c-abq x) u(xq^2)-[c+q-(a+b)qx]u(qx)+q(1-x) u(x)=0. 
\end{equation*}

{2)} When $b_3=0$, $a_1a_3  b_1b_2  \not=0$,  {${}_1\varphi_1(a ; c; q,x)$}:
$$(c-a q x) u(xq^2)-(c+q-qx)u(qx)+q u(x)=0.$$

3-1) When $b_1=b_2=0$, $a_3 \cdot a_2a_1  b_3 \not=0$, 
{Jackson's Bessel  functions $E_\nu^{(1)}(x;q)$}F
$$ u(xq^2)-(q^\nu+ q^{-\nu})u(xq)+(1+x/4) u(x)=0.$$

{3-2)} When $b_1=b_3=0$, $a_2 \cdot a_3 a_1b_2\not=0$, {
Hahn-Exton's Bessel  functions $E_\nu^{(3)}(x;q)$}: 
$$ u(xq^2)+ [-(q^\nu+ q^{-\nu}) +q^{2-\nu}x] u(xq)+   u(x)=0.$$

{3-3)} When $b_3=a_1=0$, $a_2b_2 \cdot a_3 b_1\not=0$,  {
$q$-Hermite-Weber ${}_1\varphi_1(a ;0; q,x)$}
$$ a  x u(xq^2) + (1- x  ) u(xq)-  u(x)=0.$$

{4-1)} When $b_1=a_2=b_3=0$, {$q$-Airy $_1\varphi_1\left( 0 ;-q  ;q,-x\right)$} :
$$ u(xq^2)+x u(xq)- u(x)=0.$$

4-2) When $a_1=b_2=b_3=0$, {the Ramanujan function $_0\varphi_1\left( - ;0  ;q,-tq\right)$}:
$$ qx u(xq^2)-  u(xq)+  u(x)=0.$$
\end{theorem}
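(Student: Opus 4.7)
The plan is to classify the coefficient vector $(a_0, b_0, a_1, b_1, a_2, b_2)$ modulo the group generated by the four transformations of Theorem \ref{trans}, via case analysis on the vanishing pattern of the coefficients. The transformations (1)--(4) act on the Riemann scheme of $q$-exponents in a controlled way: (1) shifts the virtual exponents $\rho_k$; (2) simultaneously shifts one characteristic exponent at $0$ and one at $\infty$; (3) swaps the exponents at $0$ with those at $\infty$; and (4), together with its theta-function analogue from Lemma \ref{guage}, trades vanishing ``corner'' coefficients among $(a_0, b_2, b_0, a_2)$. Together these moves absorb two continuous parameters and allow us to move certain zeros around the Riemann scheme. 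The essential invariant of an orbit is the Newton polygon at $x=\infty$.

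First I would handle the generic case $a_0 a_2 b_0 b_2 \ne 0$. Here all four characteristic exponents are finite, and using (1) and (2) to fix (say) the virtual exponent $\rho_1$ and one characteristic exponent at $0$, the remaining three free parameters match $(a,b,c)$ in ${}_2\varphi_1$, yielding Case 1. In the one-corner-vanishing case, inversion (3) combined with (4) lets us move the zero to $b_2$, and the same normalization scheme produces ${}_1\varphi_1$, yielding Case 2.

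The two-coefficient-vanishing cases require more care, and among the configurations carrying a nontrivial three-term recurrence, I expect exactly three essentially distinct orbits: $(b_0, b_2) = 0$, where the linear part resides only in the middle coefficient, reducing after normalization to the Hahn--Exton form $J_\nu^{(3)}$ (Case 3-2); $(a_0, b_2) = 0$, the ``diagonal'' configuration giving the $q$-Hermite--Weber equation (Case 3-3); and the slope-$1/2$ Newton polygon pattern $(b_0, b_1) = 0$, reducing to $J_\nu^{(1)}$ (Case 3-1). I would verify that every other two-coefficient vanishing pattern either maps to one of these three under (3)--(4) or collapses to a two-term recurrence (hence is elementary). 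The three-coefficient vanishings that still carry a genuine three-term recurrence fall into exactly two orbits, yielding $q$-Airy (Case 4-1, $b_0 = a_1 = b_2 = 0$) and Ramanujan (Case 4-2, $a_0 = b_1 = b_2 = 0$) after normalization by (1), (2).

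The main obstacle is transformation (4): because it is nonlinear in the Riemann exponents and can alter the vanishing pattern, verifying both completeness (every nondegenerate hypergeometric-type equation reduces to one of the seven canonical forms) and irredundancy (the seven are pairwise inequivalent) requires careful bookkeeping of how (4) acts on each of the sixteen sign patterns of the four corners. I expect irredundancy to follow from invariants such as the Newton polygon slope at $\infty$ and the number of finite exponents in the Riemann scheme, which together separate all seven canonical forms.
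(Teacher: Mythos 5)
Your plan follows the same architecture the paper itself relies on --- normalize with the transformations of Theorem \ref{trans}, read off their action on Matsumoto's Riemann scheme, and sort the vanishing patterns of $(a_0,b_0,a_1,b_1,a_2,b_2)$ with the Newton diagram as bookkeeping device. (The paper in fact states Theorem \ref{class} with no written proof at all, only the subsequent tables of Riemann schemes and Newton diagrams, so you are supplying what is left implicit; your generic-case parameter count, which uses the Fuchs relation $\rho_1\rho_2\lambda_1\lambda_2\mu_1\mu_2=1$, and your decoding of the statement's shifted indices are both correct.) However, one of your steps fails as stated: the trichotomy for two-coefficient vanishings. In the indexing of \eqref{qhyper}, the pattern $a_1=b_2=0$, i.e. $(a_0+b_0x)\,u(xq^2)+b_1x\,u(xq)+a_2\,u(x)=0$ with all remaining coefficients nonzero, is a genuine three-term equation that neither collapses to a two-term recurrence nor maps to 3-1, 3-2 or 3-3. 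Its signature --- both exponents at one of the points $0,\infty$ finite and nonzero, exactly one exponent lying in $\{0,\infty\}$ at the other point --- is preserved by (1)--(4) (transformation (4) fixes the $\mu_i$ and sends $\lambda_i\mapsto q/(\mu_1\mu_2\lambda_i)$, so it trades $\infty$ for $0$ but never creates or destroys a degeneracy) and matches only case 2: indeed this pattern satisfies the case-2 hypothesis of the theorem verbatim, since the stated conditions never constrain the middle coefficients, and it is exactly the equation of ${}_1\varphi_1(a;-q;q,x)$. So your dichotomy must read ``maps to one of the three new orbits, \emph{or into cases 1--2 with special middle coefficients}, or is two-term''; the same repair is needed for single middle vanishings such as $a_1=0$, which is case 1 with $c=-q$.

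A second claim that would not survive the bookkeeping you defer is that ``the essential invariant of an orbit is the Newton polygon at $x=\infty$''. Transformation (3) exchanges the polygons at $0$ and $\infty$; transformation (4) --- which the paper identifies with Hahn's formula \eqref{hahn} --- moves the $J^{(2)}_\nu$ pattern ($b_0\ne0$, $b_1=b_2=0$) to the $J^{(1)}_\nu$ pattern ($b_2\ne0$, $b_0=b_1=0$); and the theta variant from Lemma \ref{guage} changes the slopes themselves: the gauge $u=w/\theta_q(rx)$ carries $a_0u(xq^2)+(a_1+b_1x)u(xq)+b_2x\,u(x)=0$, with slopes $\{0,-1\}$ at infinity, to the case 3-3 pattern $a_0=b_2=0$ with slopes $\{1,0\}$ --- and this theta step is genuinely needed to classify that pattern, since neither product gauge of Lemma \ref{guage} applies to it. For the same reason your proposed separating invariant ``number of finite exponents in the Riemann scheme'' is not invariant under (4). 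What does separate orbits is the joint degeneracy data at both points (which of $\mu_i,\lambda_i,\rho_i$ lie in $\{0,\infty\}$) modulo the induced action of (1)--(4). Note finally that Theorem \ref{class} claims only reducibility, not pairwise inequivalence of the seven forms --- the paper even proves (4-1) and (4-2) equivalent under shearing, a transformation outside Theorem \ref{trans} --- so your irredundancy step is optional; with your stated invariants it would in any case not go through.
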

 
We list up the Riemann scheme corresponding to the classification in the Theorem \ref{class}. 
In the following, $A$ and $B$ are quasi-constants, i.e. 
$A(qx)=A(x), B(qx)=B(x).$

\par \medskip 

(1) For $ _2\varphi_1 (a,b,;c;q,x)$, the Riemann scheme is given by
\begin{equation*}
\Phi\left\{
\begin{matrix}
0 & \infty & * \\
1 & a & {c}/{abq} &; \quad  x\\
{q}/{c} & b & 1
\end{matrix}
\right\}.
\end{equation*}
The solution space $\Phi$  is
$$ 
\Phi= A\, {}_2\varphi_1(a,b;c;q,x) + B \, 
x^{1-\gamma}{_2\varphi_1}\left(\frac{aq}{c},\frac{bq}{c};\frac{q^2}{c};q,x\right)
$$
around $x=0$. Here $q^\gamma = c$. 

\par \medskip 

(2) For $ _1\varphi_1 (a ;c;q;x)$, the Riemann scheme is given by
\begin{equation*}
\Phi\left\{
\begin{matrix}
0 & \infty & * \\
1 & a & {c}/{aq} &; \quad  x\\
{q}/{c} & \infty & 0
\end{matrix}
\right\}.
\end{equation*}
The solution space $\Phi$  is
$$ 
\Phi= A\, {}_1\varphi_1(a  ;c;q,x) + B \, 
x^{1-\gamma}{_1\varphi_1}\left(\frac{aq}{c};\frac{q^2}{c};q,x\right)
$$
around $x=0$. Here $q^\gamma = c$.

\par \medskip 

(3-1)  The Riemann scheme is given by
\begin{equation*}
\Phi\left\{
\begin{matrix}
0 & \infty & * \\
q^\nu & 0 & \infty &; \quad  x\\
q^{-\nu} & 0  & -1/4
\end{matrix}
\right\}.
\end{equation*}
The solution space $\Phi$  is
$$ 
\Phi= A\, x^\nu{}_2\varphi_1(0,0;q^{1+\nu};q,-x/4) + B \, 
 x^{-\nu}{}_2\varphi_1(0,0;q^{1-\nu};p,-x/4)
$$
around $x=0$. 

The standard form of Jackson's first $q$-Bessel function is 
\[
J^{(1)}_\nu (x;q) =  \frac{(q^{\nu+1};q)_\infty}{(q ;q)_\infty} \left( 
\frac{x}{2}\right)^\nu \ {}_2\varphi_1\left( 0,0; q^{\nu+1};q,- \frac{x^2}4\right). 
\]
Solutions of the $q$-linear equation
\begin{equation}\label{qb1}
u( x q^2) - (q^{\nu/2} + q^{-\nu/2} ) u(x q) + \left( 1 + \frac{x^2}{4} \right)  u(x)=0
\end{equation}
is $u= A\, J^{(1)}_\nu (x;q) +B \, J^{(1)}_{-\nu} (x;q). $  The equation \eqref{qb1} is obtained 
by a shearing transformation of (3-1). 
 
\par \medskip 

(3-2) The Riemann scheme is given by
\begin{equation*}
\Phi\left\{
\begin{matrix}
0 & \infty & * \\
p^{\nu} & 0 & \infty &; \quad  x\\
p^{-\nu} & \infty & 0
\end{matrix}
\right\}.
\end{equation*}
The solution space $\Phi$  is
\[
\Phi=A\, x^\nu \, {}_1\varphi_1\left( 0; q^{1+2\nu };q,  {q^2}x   \right) +
B \,  x^{-\nu} \, {}_1\varphi_1\left( 0; q^{1-2\nu};q,  {q^{2-2\nu}}x   \right).
\]

\par \medskip 

(3-3) The Riemann scheme is given by
\begin{equation*}
\Phi\left\{
\begin{matrix}
0 & \infty & * \\
1 & a & 0 &; \quad  x\\
\infty & \infty & 0
\end{matrix}
\right\}.
\end{equation*}
The solution space $\Phi$  is
\[
\Phi=A\, {}_1\varphi_0(a;-;q, x) +B \, \theta(-ax/q) {}_2\varphi_0(q/a,0;-;q, ax/q^2).
\]

\par \medskip

(4-1) The Riemann scheme is given by
\begin{equation*}
\Phi\left\{
\begin{matrix}
0 & \infty & * \\
1 & 0 & \infty &; \quad  x\\
-1 & \infty & 0
\end{matrix}
\right\}.
\end{equation*}
The solution space $\Phi$  is
\[
\Phi=A\, \textrm{Ai}_q(x) +B \,  e^{\pi i \textrm{lq}\,x} {}\textrm{Ai}_q(-x),
\]
where $\textrm{lq}\,x =\log_e x/ \log_e q$.

(4-2) The Riemann scheme is given by
\begin{equation*}
\Phi\left\{
\begin{matrix}
0 & \infty & * \\
1 & \infty & 0 &; \quad  x\\
\infty & \infty & 0
\end{matrix}
\right\}.
\end{equation*}
The solution space $\Phi$  is
\[
\Phi=A\, {}_0\varphi_1(-;0;q,-qx) +B \,  {\theta_q(x)} \, {}_2\varphi_0(0,0;-;q,-x/q). 
\]

The Newton diagram of equations in Theorem \ref{class} is as follows. The black circle means 
a coefficient which is not zero. The Newton diagram of (3-2) and (4-1) are the same, because the $q$-Airy function is 
a special case of $E^{(1)}_\nu (x;q)$ in case $q^\nu=\pm i$.  The Newton diagram 
explains that the coalescent diagram in Figure \ref{q-conf}.  If   the Newton diagram of a
 $q$-difference equation is a subset of the the Newton diagram of another 
 $q$-difference equation, we may take a suitable limit. 

\begin{center}
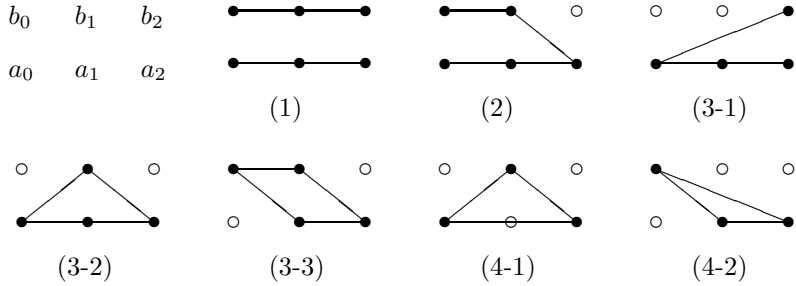
\begin{figure}[ht] 
	\begin{picture} (300, 120) (-20,0)
	
	\put( -5,75){$a_0$} \put(20,75){$a_1$}  \put(45, 75){$a_2$} 
	\put( -5,95){$b_0$} \put(20,95){$b_1$}  \put(45, 95){$b_2$} 
    \put(10,60){{  }}
    
    \put(80,80){\circle*{4}} \put(105,80){\circle*{4}}  \put(130, 80){\circle*{4}} 
	\put(80,100){\circle*{4}} \put(105,100){\circle*{4}}  \put(130, 100){\circle*{4}} 
	\put(82,80){\line(1,0){21}}\put(107,80){\line(1,0){21}} 
	\put(82,100){\line(1,0){21}}\put(107,100){\line(1,0){21}}   
    \put(90,60){{ (1)  }}

    \put(160,80){\circle*{4}} \put(185,80){\circle*{4}}  \put(210, 80){\circle*{4}} 
	\put(160,100){\circle*{4}} \put(185,100){\circle*{4}}  \put(210, 100){\circle{4}} 
	\put(162,80){\line(1,0){21}} \put(187,80){\line(1,0){21}} 
	\put(162,100){\line(1,0){21}} \put(185,100){\line(5,-4){25}} 
    \put(170,60){{ (2)  }}
    
    \put(240,80){\circle*{4}} \put(265,80){\circle*{4}}  \put(290, 80){\circle*{4}} 
	\put(240,100){\circle{4}} \put(265,100){\circle{4}}  \put(290, 100){\circle*{4}} 
	\put(242,80){\line(1,0){21}} \put(267,80){\line(1,0){21}} 
	\put(240,80){\line(5,2){50}} 
    \put(250,60){{ (3-1)  }}

	\put( 0,20){\circle*{4}} \put(25,20){\circle*{4}}  \put(50, 20){\circle*{4}} 
	\put( 0,40){\circle{4}} \put(25,40){\circle*{4}}  \put(50, 40){\circle{4}} 
	\put( 0,20){\line(1,0){25}} \put(25,20){\line(1,0){25}} 
	\put( 0,20){\line(5,4){25}} \put(25,40){\line(5,-4){25}}
    \put(10,0){{ (3-2)  }}
    
    \put(80,20){\circle{4}} \put(105,20){\circle*{4}}  \put(130, 20){\circle*{4}} 
	\put(80,40){\circle*{4}} \put(105,40){\circle*{4}}  \put(130, 40){\circle{4}} 
	\put(80,40){\line(1,0){25}} \put(105,20){\line(1,0){25}} 
	\put(80,40){\line(5,-4){25}} \put(105,40){\line(5,-4){25}}
    \put(90,0){{ (3-3)  }}

    \put(160,20){\circle*{4}} \put(185,20){\circle{4}}  \put(210, 20){\circle*{4}} 
	\put(160,40){\circle{4}} \put(185,40){\circle*{4}}  \put(210, 40){\circle{4}} 
	\put(160,20){\line(1,0){50}}  
	\put(160,20){\line(5,4){25}} \put(185,40){\line(5,-4){25}}
    \put(170,0){{ (4-1)  }}
    
    \put(240,20){\circle{4}} \put(265,20){\circle*{4}}  \put(290, 20){\circle*{4}} 
	\put(240,40){\circle*{4}} \put(265,40){\circle{4}}  \put(290, 40){\circle{4}}
	\put(240,40){\line(5,-4){25}}  
	\put(240,40){\line(5,-2){50}} \put(265,20){\line(1,0){25}} 
    \put(250,0){{ (4-2)  }}
	
     \end{picture}
     \caption{The Newton diagram}
\end{figure}
\end{center}

\begin{lemma}
In Theorem \ref{class}, 
  {(4-2)} is equivalent to (4.1) by  {shearing transformation}.
\end{lemma}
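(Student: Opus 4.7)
The plan is to realize the shearing transformation as a quadratic change of variable on (4-2) and then match the result with (4-1) after a theta-function gauge and inversion. The Newton-diagram pictures for the two equations differ only in whether the $x$ appears in the outer coefficient or the middle coefficient; squaring the independent variable interchanges exactly these two shapes, which makes a shearing identification plausible from the outset.

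Concretely, I would first start from (4-2) with $q$ replaced by $q^2$,
$$q^{2} y\, v(yq^{4}) - v(yq^{2}) + v(y) = 0,$$
and perform the substitution $y = x^{2}q^{3}$, setting $V(x) = v(x^{2}q^{3})$. Since $y \mapsto yq^{2}$ corresponds to $x \mapsto xq$ and $y\mapsto yq^{4}$ to $x\mapsto xq^{2}$, the equation becomes
$$q^{5} x^{2}\, V(xq^{2}) - V(xq) + V(x) = 0.$$
This intermediate equation is, up to overall sign, precisely the modified equation produced by Lemma \ref{airy-ram} starting from (4-1): the gauge-and-inversion substitution $V(x) = \theta_q(-q^{2}x)\, u(1/x)$ sends a solution $u$ of (4-1) into a solution of $-q^{5}x^{2} V(xq^{2}) - V(xq) + V(x) = 0$. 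Combining these two directions, I obtain the required chain of gauge transformations, inversion, and quadratic substitution linking (4-2) and (4-1); the quadratic substitution $y = x^{2}$ is exactly the inverse shearing transformation of subsection \ref{s:qhyper:shear}.

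The only remaining work is routine bookkeeping of the factors of $q$ and the overall sign (absorbed into $V$ by a trivial rescaling), together with checking that shifts $y\mapsto yq^{2}$ in the $q^{2}$-equation correspond to $x\mapsto xq$ in the $q$-equation after setting $y = x^{2}q^{3}$. The main obstacle, such as it is, lies in identifying the correct rescaling $y \mapsto x^{2}q^{3}$ so that the coefficient $q^{2}y$ of the $v(yq^{4})$ term matches the coefficient $q^{5}x^{2}$ coming out of Lemma \ref{airy-ram}; once that scaling is fixed, everything else collapses onto the content of Lemma \ref{airy-ram} and requires no further argument.
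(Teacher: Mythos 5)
Your overall route is exactly the paper's: the paper offers no independent proof of this lemma beyond pointing back to Lemma~\ref{airy-ram} and the shearing discussion, and your chain (shear the base-$q^{2}$ Ramanujan equation down to base $q$, then match the result with the theta-gauge-plus-inversion image of \eqref{q-a1} from Lemma~\ref{airy-ram}) reassembles precisely that argument. Your verification that $y\mapsto yq^{2}$ corresponds to $x\mapsto xq$ under $y=x^{2}q^{3}$ is also correct, and the sheared equation you obtain, $q^{5}x^{2}V(xq^{2})-V(xq)+V(x)=0$, is right.

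The one genuine slip is your treatment of the sign, which is not ``routine bookkeeping'' as you claim. Comparing your sheared equation with the output of Lemma~\ref{airy-ram},
\begin{equation*}
q^{5}x^{2}V(xq^{2})-V(xq)+V(x)=0
\qquad\text{versus}\qquad
-q^{5}x^{2}v(xq^{2})-v(xq)+v(x)=0,
\end{equation*}
the discrepancy sits in \emph{one} term only, so it is not an overall sign; and it cannot be ``absorbed into $V$ by a trivial rescaling'': the equations are linear homogeneous, so $V\to cV$ changes nothing, while a gauge with quasi-constant ratio $g(qx)/g(x)=c$ multiplies the three coefficients by $c^{2},c,1$, and $c^{2}=-1$ forces $c=\pm i$, which corrupts the middle term. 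The correct repair is to rescale the \emph{argument}: take $y=-q^{3}x^{2}$ instead of $y=q^{3}x^{2}$, which is admissible since $x\to cx$ is transformation (1) of Theorem~\ref{trans}; then $q^{2}y=-q^{5}x^{2}$ and the two equations agree identically. Equivalently, the modified equation of Lemma~\ref{airy-ram} is solved by $\mathrm{A}_{q^{2}}(-x^{2}q^{3})$ rather than $\mathrm{A}_{q^{2}}(x^{2}q^{3})$ --- note that the paper's own statement of Lemma~\ref{airy-ram} carries this same sign typo, whereas the argument $-q^{3}/x^{2}$ in Morita's formula \eqref{morita} confirms the minus sign. With $y=-q^{3}x^{2}$ substituted for your scaling, the rest of your argument goes through and coincides with the paper's proof.
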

This lemma shows that there exists a relation between the $q$-Airy function and the Ramanujan function. 
Recently, Morita  \cite{Morita} has shown a connection formula between $\mathrm {Ai}_q (x)$ and $\mathrm {A }_{q}(x)$:
\begin{equation}\label{morita}
\mathrm {A }_{q^2}(-q^3/x^2) =  \frac{q^2}{(q,-1;q)_\infty} 
\left( -\theta(-x/q) \mathrm {Ai}_q (xq^2) + {\theta(x/q) } \mathrm {Ai}_q (-xq^2)  \right).
\end{equation}

\section{Hypergeometric solutions of the $q$-Painlev\'e equations}\label{s:Pain}

As the same as the Painlev\'e differential equations have particular solutions 
represented by (confluent) hypergeometric functions, the $q$-Painlev\'e equations
also have special solutions written by $q$-hypergeometric functions. 

Kajiwara,   Masuda,   Noumi,  Ohta, and  Yamada has studied $q$-hypergeometric solutions of the $q$-Painlev\'e equations\cite{KMNOY}.
The degeneration diagram of $q$-hypergeometric solutions of 
the $q$-Painlev\'e equations is as follows:
\[
\begin{array}{cccccccccc}
q$-${\bf P}&q\textrm{-}P_\mathrm{VI}  & \rightarrow & q\textrm{-}P_\mathrm{V} &\rightarrow & 
\begin{array}{c}q\textrm{-}P_\mathrm{IV}\\
q\textrm{-}P_\mathrm{III}\\
\end{array}
& \rightarrow & 
q\textrm{-}P_\mathrm{II}& \rightarrow &  q\textrm{-}P_\mathrm{I}\\
\\
\mathrm{\bf HG} & {\displaystyle {}_2\varphi_1}& \rightarrow
& {\displaystyle {}_1\varphi_1} & \rightarrow
& {\displaystyle {{}_1\varphi_1\left( a;0  ;q, z\right)}}\atop
{\displaystyle {{}_1\varphi_1\left(  0; b  ;q, z\right)}} & \rightarrow
& {\displaystyle {}_1\varphi_1\left( 0; -q  ;q, z\right)}& 
\rightarrow & {\mathrm {none}} \\  \\ 
&  {(1)} & \rightarrow
&  {(2)}  & \rightarrow
&  \displaystyle \textrm{(3-3)}  \atop
 \displaystyle \textrm{(3-2)} & \rightarrow
&  \textrm{(4-1)} & 
\rightarrow & \mathrm{none}
\end{array}
\]
Comparing our list in Theorem \ref{class}, we do not have (3-1) and (4-2). 
The equation (4-2) is related to (4-1)  by a shearing transformation. 
The equation (3-1) appears in  another form of $q$-$P_{\rm III}$.  

It is known that there are several types of the $q$-Painlev\'e equations, whose 
limit $q\to 1$ tends to the same Painlev\'e differential equation. 
It is widely known that there exist two different types of $q$-$P_{\rm III}$.  One is  
 called $q$-$P_{\textrm{III}}(A_5^{(1)})$
by Sakai \cite{S}:
\begin{equation*}
\frac{y\bar{y}}{a_3a_4}
 =-\frac{\bar{z}(\bar{z}-b_2t)}
{\bar{z}-b_3}, \quad
\frac{z\bar{z}}{b_3}
=-\frac{y(y-a_1t)}{a_4(y-a_3)}.
\end{equation*}
Here the affine root system $(A_5^{(1)})$ means the type of the initial value space 
of the $q$-Painlev\'e equation, which is completely classified by Sakai \cite{S}. 

Another equation  is shown by Ramani, Grammaticos  and Hietarinta \cite{RGH91}: 
\begin{equation}\label{RGH}\frac{\overline{w} \underline{w}}{a_3a_4 }
= \frac{(w-a_1 s)(w- s a_2)}{(w- a_1)(w-a_4)},\end{equation}
which is a {symmetric specialization}  of $q$-$P_{\rm VI}$ found by Jimbo and Sakai \cite{JS96}.

Jimbo and Sakai found $q$-$P_{\rm VI}$ as a connection preserving deformation of a linear
$q$-difference equation.  Their $q$-$P_{\rm VI}(A_3^{(1)})$ is given by
\begin{align*}
\frac{y\bar{y}}{a_3a_4}
&=\frac{(\bar{z}-b_1t)(\bar{z}-b_2t)}{(\bar{z}-b_3)(\bar{z}-b_4)},\quad
\frac{z\bar{z}}{b_3b_4}
 =\frac{(y-a_1t)(y-a_2t)}{(y-a_3)(y-a_4)}, \quad 
\left(\frac{b_1b_2}{b_3b_4}=q\frac{a_1a_2}{a_3a_4}\right).
\end{align*}
 For $q$-$P_{\rm VI}(A_3^{(1)})$, we set 
\begin{align*} t=s^2, \   q=p^2, \  
       & b_i= p a_i \  (i=1,2), \     b_i=  a_i \  (i=3,4), \\ 
       & y(t)=\overline{w}(s), \  z(t)=w(s). 
\end{align*}
Then we obtain the equation \eqref{RGH}.  In this sense, 
\eqref{RGH}  as can be considered as a {symmetric specialization}  of $q$-$P_{\rm VI}$. 
Since the  type of the initial value space of \eqref{RGH} is the same as 
the  type of $q$-$P_{\rm VI}(A_3^{(1)})$, we may denote \eqref{RGH} as $q$-$P_{\textrm{III}}(A_3^{(1)})$. 
Kajiwara,   Ohta and  Satsuma\cite{KOS} has shown that 
$J_\nu^{(1)}(x;q)$ is a special solution of $q$-$P_{\textrm{III}}(A_3^{(1)})$.  
This fact  is quite 
natural since $J_\nu^{(1)}(x;q)$ can be represented by a basic hypergeometric series ${}_2\varphi_1(p^{\nu+1/2},- p^{\nu+1/2}; p^{2\nu+1 };p,x) $, where $ p=\sqrt{q} $. 

\begin{theorem}
In the degeneration scheme of $q$-special functions in Theorem \ref{class}, six types 
of $q$-special functions  appear as special solutions of the $q$-Painlev\'e equations. 
Type (3-1) is appeared as special solutions of a symmetric specialization  of $q$-$P_{\rm VI}$. 
Type (4-2) does not appear as special solutions of the $q$-Painlev\'e equations, but 
a shearing transform of (4-2) is equivalent to the type (4-1), which is 
appeared as special solitons of $q$-$P_{\rm II}$.
\end{theorem}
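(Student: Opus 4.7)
The plan is to assemble three existing ingredients: the Kajiwara--Masuda--Noumi--Ohta--Yamada degeneration diagram of $q$-hypergeometric solutions of the $q$-Painlev\'e equations (displayed just before the statement), the Kajiwara--Ohta--Satsuma theorem identifying Jackson's first $q$-Bessel function as a special solution of the symmetric $q$-$P_{\rm III}(A_3^{(1)})$, and Lemma \ref{airy-ram}. First, I would read off five of the six types directly from the KMNOY diagram: each of its rows associates a $q$-Painlev\'e equation with a basic hypergeometric function, and comparing each such function against the normal forms in Theorem \ref{class} yields the matchings ${}_2\varphi_1 \leftrightarrow$ (1), ${}_1\varphi_1 \leftrightarrow$ (2), ${}_1\varphi_1(a;0;q,z) \leftrightarrow$ (3-3), ${}_1\varphi_1(0;b;q,z) \leftrightarrow$ (3-2), and $\mathrm{Ai}_q \leftrightarrow$ (4-1). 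These are all of the form ``the $q$-difference equation satisfied by the hypergeometric function coincides, after a change of variables allowed by Theorem \ref{trans}, with the normal form listed in the corresponding class''; in each case the check is mechanical.

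Next, for type (3-1) I would invoke the KOS theorem: they show that $J_\nu^{(1)}(x;q)$ solves the symmetric specialization $q$-$P_{\rm III}(A_3^{(1)})$ of $q$-$P_{\rm VI}$ given by \eqref{RGH}. That $J_\nu^{(1)}$ belongs to class (3-1) is immediate from its defining series ${}_2\varphi_1(0,0;q^{\nu+1};q,-x^2/4)$, which via the inverse shearing $t=x^2$, $p=\sqrt{q}$ is precisely the normal form listed there. The identification also fits naturally with the rewriting $J_\nu^{(1)}(x;q)={}_2\varphi_1(p^{\nu+1/2},-p^{\nu+1/2};p^{2\nu+1};p,x)$ with $p=\sqrt{q}$, which explains why it descends from the ${}_2\varphi_1$ slot of $q$-$P_{\rm VI}$ under the symmetric reduction $t=s^2$, $q=p^2$ described in the paragraph preceding the statement.

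Finally, for type (4-2) I would apply Lemma \ref{airy-ram}, which already provides the desired shearing--gauge equivalence: $\theta_q(-q^2 x)\mathrm{Ai}_q(1/x)$ solves a modified equation whose inverse shearing is the Ramanujan equation for $\mathrm{A}_{q^2}(x^2q^3)$. Since $\mathrm{Ai}_q$ arises from $q$-$P_{\rm II}$ by the first step, so does the Ramanujan function up to this shearing--gauge equivalence, while (4-2) itself is absent from the KMNOY list. The main obstacle is to confirm that the equivalences in Theorem \ref{trans} and the shearing transformation never collapse a pair of distinct classes, so that the count of ``six'' and the placement of (3-1) under the symmetric $q$-$P_{\rm VI}$ are genuinely forced and not an artifact of the chosen normal forms; here one compares the Newton diagrams listed in the proof of Theorem \ref{class} against the linear $q$-difference data attached to each $q$-Painlev\'e equation, which makes the verification routine but unavoidable.
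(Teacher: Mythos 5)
Your proposal is correct and matches the paper's own argument: the paper gives no separate proof, since the theorem merely summarizes the preceding discussion in Section \ref{s:Pain}, which consists of exactly your three ingredients --- the KMNOY degeneration diagram identifying types (1), (2), (3-3), (3-2), (4-1) with special solutions of $q$-$P_{\rm VI}$ through $q$-$P_{\rm II}$, the Kajiwara--Ohta--Satsuma result placing $J_\nu^{(1)}$ (type (3-1)) under the symmetric specialization \eqref{RGH} of $q$-$P_{\rm VI}$ via $t=s^2$, $q=p^2$, and the shearing equivalence of (4-2) with (4-1) from Lemma \ref{airy-ram}. Your closing caution about verifying that the transformations of Theorem \ref{trans} do not collapse distinct classes (via the Newton diagrams) is a reasonable extra check, though the paper takes the distinctness of the classes in Theorem \ref{class} as already established.
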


\section{ Matrix Form} \label{s:mat}
 In $2\times 2$ matrix form, a $q$-difference  equations of the hypergeometric type is 
 given by
 \begin{equation}\label{matr}
Y(xq)=A(x)Y(x)=\left(A_0+A_1x\right)Y(x)
\end{equation}  
Here 
 $$Y(x)=\begin{pmatrix}
y_1(x ) \\ y_2(x )
\end{pmatrix}, \quad   A_0 = 
 \begin{pmatrix}
a_{11} &  a_{12} \\
a_{21} &  a_{22}
\end{pmatrix}, \quad 
A_1=\begin{pmatrix}
b_{11} &  b_{12} \\
b_{21} &  b_{22}
\end{pmatrix} .
$$
Matrix forms have the following transformations:

 (1)  Change  {$ {x \to c x}$}: $A_0+A_1x \to A_0+cA_1x$ 

 (2)  Change {$ {Y \to x^{-\gamma} Y}$}\, ($ c=  q^\gamma$): $A_0+A_1x \to c A_0+cA_1x$ 
 
 (3) Change $x \to 1/x$ and $Y \to f(x)Y$, where $f(x)$ is a scalar function
 
 (4) Change {$ {Y \to P Y}$}: $A_0+A_1x \to P^{-1}(A_0+cA_1)P$ 

\noindent 
The equation \eqref{matr} has 
 {eight} parameters $A_0$ and $A_1$, but we may generically deduce to {three}  by the transformations 
(1-4). 

\begin{lemma} 
We assume that $b_{12}=0$ in \eqref{matr}.  
 Then $y_1(x )$ satisfies a {single equation 
of the hypergeometric type}.
\end{lemma}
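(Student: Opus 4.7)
The plan is to eliminate $y_2$ from the system and then invoke a gauge transformation from Lemma~\ref{guage} to restore linearity of the resulting coefficients.

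First I would rewrite $Y(xq)=(A_0+A_1x)Y(x)$ with $b_{12}=0$ as the two scalar relations
\begin{align*}
y_1(xq)&=(a_{11}+b_{11}x)y_1(x)+a_{12}\,y_2(x),\\
y_2(xq)&=(a_{21}+b_{21}x)y_1(x)+(a_{22}+b_{22}x)y_2(x),
\end{align*}
and, assuming the generic condition $a_{12}\neq 0$, solve the first for $y_2(x)$, shift $x\mapsto xq$, and substitute into the second. After clearing the denominator $a_{12}$, one finds
\[
y_1(xq^2)-\bigl[(a_{11}+a_{22})+(b_{11}q+b_{22})x\bigr]y_1(xq)+\det(A_0+A_1x)\,y_1(x)=0.
\]

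The coefficients of $y_1(xq^2)$ and $y_1(xq)$ are already linear in $x$. The crucial observation is that the coefficient of $y_1(x)$ is exactly $\det(A_0+A_1x)=(a_{11}+b_{11}x)(a_{22}+b_{22}x)-a_{12}(a_{21}+b_{21}x)$, whose leading term is $b_{11}b_{22}x^2$. If $b_{11}b_{22}=0$, the equation is already of hypergeometric type and there is nothing more to do. Otherwise, I factor this quadratic over $\mathbb{C}$ as $b_{11}b_{22}(1-sx)(1-tx)$ and apply the gauge $y_1(x)=v(x)/(sx;q)_\infty$ provided by Lemma~\ref{guage}(1): exactly the factor $(1-sx)$ is absorbed from the $y_1(x)$ coefficient while a factor $(1-sqx)$ is introduced in front of $v(xq^2)$. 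The resulting equation
\[
(1-sqx)\,v(xq^2)-\bigl[(a_{11}+a_{22})+(b_{11}q+b_{22})x\bigr]v(xq)+b_{11}b_{22}(1-tx)\,v(x)=0
\]
is then linear-linear-linear in $x$, i.e.\ of hypergeometric type.

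The main obstacle is precisely that the naive elimination yields a $q$-equation whose zeroth-shift coefficient is a quadratic rather than a linear polynomial; the key insight is that this quadratic is nothing but $\det A(x)$, and Lemma~\ref{guage}(1) is tailor-made to trade one of its roots against a factor in the leading-shift coefficient. The degenerate case $a_{12}=0$ decouples the system so that $y_1$ already satisfies a first-order equation (trivially of hypergeometric type), and the exceptional case where a root of $\det A(x)$ sits at $0$ (i.e.\ $\det A_0=0$) is dealt with by using the theta-function gauge of Lemma~\ref{guage}(2) instead of the $(sx;q)_\infty$-gauge.
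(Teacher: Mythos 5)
Your proposal is correct and is essentially the paper's own proof: you eliminate $y_2$ (legitimate because $b_{12}=0$ keeps the $(1,2)$-entry of $A(x)$ constant, so no shifted denominator appears), identify the coefficient of $y_1(x)$ as the quadratic $\det A(x)$, and strip one of its roots with the gauge transformation of Lemma~\ref{guage}(1) --- which is exactly the one-line reduction the paper invokes --- while your treatment of the degenerate cases ($a_{12}=0$, $b_{11}b_{22}=0$, and $\det A_0=0$ via the theta gauge of Lemma~\ref{guage}(2)) fills in details the paper leaves implicit. One cosmetic slip: when $\det A_0\neq 0$ the factorization reads $\det A(x)=\det A_0\,(1-sx)(1-tx)$ with $st\,\det A_0=b_{11}b_{22}$, not $b_{11}b_{22}(1-sx)(1-tx)$, so the constant in your final displayed equation should be $\det A_0$; only the roots enter the gauge, so the argument is unaffected.
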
 

\begin{proof}
If we eliminate $y_2$, we obtain
\begin{eqnarray*}
 y_1(xq^2)= [a_{11} + a_{22} + (b_{11}q+b_{22}) x ]  y_1(xq) \hskip 4cm \\ 
 +  [ (a_{12}a_{21} -a_{11}  a_{22} q) 
+ (a_{12} b_{21} -a_{11} b_{22}   -a_{22} b_{11})x - b_{11} b_{22} x^2]y_1(x) =0. 
\end{eqnarray*}
By Lemma \ref{guage}, this equation reduces to the hypergeometric type.
\end{proof}
Since  we may assume $b_{12}=0$ by a transformation (4), this assumption is 
not essential.  
We give a classification theorem for $q$-difference equation of $2\times 2$ 
matrix type:
\begin{theorem} A $q$-difference equation \eqref{matr} 
reduces to one of the following equation by transforms (1-4) except for the 
cases (i) $A_0$  or $A_1$ is a zero matrix, (ii) $\det A(x)\equiv 0$.

\noindent
{1)} $\det A_0 \not= 0, \det A_1 \not= 0$: {$ _2\varphi_1(a,b;c;q,x)$}
$$ A(x)=  \begin{pmatrix}
1 &  (1-a)/c \\
0 &    q/c
\end{pmatrix} +
 \begin{pmatrix}
-b/c &  0 \\
(c-b)q/c &  -aq/c
\end{pmatrix} x.$$ 
{2)} $\det A_0 \not= 0, \det A_1 \sim \mathrm{diag} (0, \mu)$, $\det A(x)\not=\textrm{const.}$: {$ _1\varphi_1(a  ;c;q,x)$}
$$ A(x)=  \begin{pmatrix}
1 &  (1-a)/c \\
0 &    q/c
\end{pmatrix} +
 \begin{pmatrix}
-1/c &  0 \\
-q/c &  0
\end{pmatrix} x.$$ 
3-1) $\det A_0 \not= 0,  A_1^2=1$: {$ E_\nu^{(1)}(x;q)$}
 $$ A(x)=  \begin{pmatrix}
q^\nu &  1 \\
0 &    q^{-\nu}
\end{pmatrix} +
 \begin{pmatrix}
0 &  0 \\
-1/4 &  0
\end{pmatrix} x.$$ 
3-2) $\det A_0 \not= 0,  A_1\sim \mathrm{diag} (0, \mu), \det A(x)=\textrm{const.}$: {$E_\nu^{(3)}(x;q)$}
$$ A(x)=  \begin{pmatrix}
0 &  1 \\
-1 &  q^\nu+ q^{-\nu}
\end{pmatrix} +
 \begin{pmatrix}
0 &  0 \\
0 &  -q^{2-\nu}
\end{pmatrix} x.$$
{3-3)} $ A_0\sim \mathrm{diag} (0, \lambda),  A_1\sim \mathrm{diag} (0, \mu)$: 
{ ${}_1\varphi_1(a ;0; q, x)$}
$$ A(x)=  \begin{pmatrix}
a &  a \\
q - a &  q - a
\end{pmatrix} +
 \begin{pmatrix}
0 &   0 \\
0 &  -q
\end{pmatrix} x.$$
4-1) $\det A_0 \not= 0,  A_1\sim \mathrm{diag} (0, \mu), \det A(x)=\textrm{const.}$: the {$q$-Airy} function
  $$ A(x)=  \begin{pmatrix}
0 &  1 \\
1 &  0
\end{pmatrix} +
 \begin{pmatrix}
0 &  0 \\
0 &  -1
\end{pmatrix} x.$$ 
4-2)  $\det A_0 \sim \mathrm{diag} (0, \lambda),  A_1^2=1$: {the Ramanujan} function
  $$ A(x)=  \begin{pmatrix}
1 &  -q^{-1} \\
0 &  0
\end{pmatrix} +
 \begin{pmatrix}
0 &  0 \\
1 & 0
\end{pmatrix} x.$$ 
5) $A_0^2=1,  A_1^2=1$:
  $$ A(x)=  \begin{pmatrix}
0 & 1 \\
0 &  0
\end{pmatrix} +
 \begin{pmatrix}
0 &  0 \\
1 & 0
\end{pmatrix} x.$$ 
\end{theorem}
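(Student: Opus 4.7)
The plan is to exploit transformation (4) (gauge by a constant matrix $P$) to bring $A_1$ into Jordan form, and then use the lemma immediately preceding the theorem to reduce to a scalar $q$-difference equation of the hypergeometric type, to which Theorem~\ref{class} applies.

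First, by transformation (4) I would conjugate $A(x)$ so that $b_{12}=0$: choose $P$ so that an eigenvector of $A_1$ (or the generator of its kernel, if $A_1$ is not semisimple) becomes the second basis vector. By the preceding lemma, $y_1(x)$ then satisfies a single second-order $q$-difference equation of the hypergeometric type, after absorbing any Pochhammer or theta factor produced via Lemma~\ref{guage}. Theorem~\ref{class} classifies that scalar equation into the seven types (1), (2), (3-1), (3-2), (3-3), (4-1), (4-2). I would then match each scalar type to a matrix case by reading off rank and Jordan data: $\det A_0\ne 0$ and $\det A_1\ne 0$ gives case (1); one rank-$1$ matrix with simple structure corresponds to (2), (3-2) or (4-1); both rank-$1$ to (3-3), (4-2) or (5); and so on. The condition $\det A(x)\equiv\textrm{const.}$, which is invariant under (1)--(4), distinguishes (2) from (4-2) and (3-2) from (4-1); the Jordan type of a rank-$1$ block (semisimple vs.\ nilpotent) separates (3-2)/(4-1) from (3-3)/(4-2).

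Within each regime I would then use the remaining freedom to fix the surviving constants. Transformation (1) (scaling $x$) absorbs one parameter, transformation (2) (scaling $Y$) another, and the stabilizer of $\{b_{12}=0\}$ inside the group of conjugations (upper triangular $P$) absorbs two more; starting from the eight entries of $(A_0,A_1)$ this leaves three essential parameters in the generic case, matching the $(a,b,c)$ of case~(1). The other cases follow similarly, with the additional degenerations forcing further parameters to zero or to fixed values; transformation~(3), $x\to 1/x$, is then invoked only where necessary to swap the roles of $0$ and $\infty$ (e.g.\ to reach (4-2) from (4-1)).

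Case (5), where both $A_0$ and $A_1$ have rank $1$ but their images are transverse, does not descend to a nondegenerate scalar equation: the reduction gives only a first-order relation for $y_1$. I would handle it directly, placing $A_0$ and $A_1$ in a pair of transposed nilpotent blocks by a suitable conjugation to reach the stated canonical form. The main obstacle is precisely this border between the matrix and scalar classifications, combined with the bookkeeping when Lemma~\ref{guage} introduces extra $q$-product or theta factors in going back from the scalar normal forms of Theorem~\ref{class} to the matrix normal forms stated here; in each case one must verify that no further conjugation or scaling can simplify the listed form, which is the most delicate part of the argument.
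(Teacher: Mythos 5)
Your overall reduction scheme---conjugate by $P$ so that an eigenvector (or kernel vector) of $A_1$ becomes the second basis vector, hence $b_{12}=0$, eliminate $y_2$ by the lemma preceding the theorem, normalize with Lemma~\ref{guage}, match the resulting scalar equation against Theorem~\ref{class}, and treat case 5) separately because its eliminated equation $y_1(xq^2)-xy_1(x)=0$ has vanishing middle coefficient and falls outside the scalar classification---is exactly the route the paper intends: its proof is the same elimination, carried out case by case with explicit fundamental solutions. However, two of the invariants you propose for the case-matching are wrong, and they would make your classification argument fail precisely at the delicate junctions you flag.

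First, constancy of $\det A(x)$ does \emph{not} distinguish (3-2) from (4-1): for the normal form of (3-2) one computes $\det A(x)=1$ and for (4-1) $\det A(x)=-1$, both constant---indeed the theorem prints literally identical side conditions for these two cases, so this dichotomy cannot separate them. What does separate them is the constant term of the middle coefficient after elimination, $a_{11}+a_{22}=\mathrm{tr}\,A_0$, which is $q^{\nu}+q^{-\nu}\neq 0$ for $E_\nu^{(3)}$ and $0$ for the $q$-Airy case (this matches the scalar conditions $a_2\neq 0$ versus $a_2=0$ in Theorem~\ref{class}, and is preserved by the transformations (1), (2), (4)). Likewise $\det A(x)$ is nonconstant for \emph{both} (2), where $\det A(x)=q/c-aqx/c^2$, and (4-2), where $\det A(x)=x/q$; those two cases are instead separated by $\det A_0\neq 0$ versus $A_0$ of rank one. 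Second, your suggestion to reach (4-2) from (4-1) via transformation (3), $x\to 1/x$, is inconsistent with the theorem itself: (4-1) and (4-2) are inequivalent under (1)--(4)---that is why both appear as separate normal forms---and the paper relates them only through a shearing transformation, which replaces $q$ by $q^{1/2}$ and is not among the allowed transforms; comparing the Riemann schemes in subsection~\ref{s:class:c}, $x\to 1/x$ sends (4-1) to a scheme with exponents $\{0,\infty\}$ at the origin, not the scheme $\{1,\infty\}$ of (4-2). Two smaller points: the stabilizer of the locus $b_{12}=0$ consists of \emph{lower}-triangular $P$ (conjugation by upper-triangular $P$ destroys $b_{12}=0$), and the printed conditions ``$A_1^2=1$'' must be read as nilpotency $A_1^2=0$, which your semisimple-versus-nilpotent dichotomy tacitly assumes but never reconciles with the statement.
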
 

\begin{proof}
It is easily checked by direct calculation. We show a single equation satisfied by $y_1$.

\medskip \noindent
{1)} $y_1$ satisfies
$$y_1(xq^2)-\frac{ c+q-(a+b)qx}{c}y_1(qx)+\frac{q(1-x)(c-ab  x)}{c^2} y_1(x)=0. $$
And $y_1(x)={}_2\varphi_1(a , b;c; q, x)/(abx/c;q)_\infty$ is a special solution. 
A fundamental solution is
\[ Y=
\frac{1}{(abx/c;q)_\infty} \begin{pmatrix}
{}_2\varphi_1(a , b;c; x) &  \frac{1-a}{q-c}x^{1-\gamma} {}_2\varphi_1(aq/c , bq/c; q^2/c; x) \\
\frac{b-c}{1-c} {}_2\varphi_1(aq , b;cq; x)x &  x^{1-\gamma}{}_2\varphi_1(aq/c , b/c; q/c; x) 
\end{pmatrix}.
\]

\medskip \noindent
{2)} $y_1$ satisfies
$$ y_1(xq^2)-\frac{(c+q-qx)}{c}y_1(qx)+\frac{q (c-a  x)}{c^2}y_1(x)=0,$$
And $y_1(x)={}_1\varphi_1(a ;c; q,x)/(a x/c;q)_\infty$ is a special solution. 
A fundamental solution is
\[ Y=
\frac{1}{(a x/c;q)_\infty} \begin{pmatrix}
{}_1\varphi_1(a  ;c; x) &  \frac{1-a}{q-c}x^{1-\gamma} {}_2\varphi_1(aq/c  ; q^2/c; x) \\
\frac{1}{1-c} {}_1\varphi_1(aq ;cq; x)x &  x^{1-\gamma}{}_2\varphi_1(aq/c  ; q/c; x) 
\end{pmatrix}.
\]

\medskip \noindent
{3-1)} $y_1$ satisfies
$$y_1(xq^2)-(q^\nu+ q^{-\nu})y_1(xq)+(1+x/4)y_1(x)=0,$$
which is the equation of $E_\nu^{(1)}(x;q)$.
A fundamental solution is
\[ Y=
\begin{pmatrix}
x^\nu {}_2\varphi_1(0,0;q^{2\nu+1};-x/4) &  
\frac{1}{q^{-\nu }-q^{ \nu }} x^{-\nu} {}_2\varphi_1(0,0;q^{-2\nu+1};-x/4)   \\
\frac{q^{ \nu }}{4(1-q^{2\nu+1})} x^{\nu+1} {}_2\varphi_1(0,0;q^{2\nu+2}; -x/4)  &  
x^{-\nu} {}_2\varphi_1(0,0;q^{-2\nu }; -x/4) 
\end{pmatrix}.
\]

\medskip \noindent
{3-2)} $y_1$ satisfies
$$y_1(xq^2)+ [-(q^\nu+ q^{-\nu}) +q^{2-\nu}x] y_1(xq)+  y_1(x)=0,$$
which is the equation of $E_\nu^{(3)}(x;q)$.
A fundamental solution is
\[ Y=
\begin{pmatrix}
x^\nu \, {}_1\varphi_1\left( 0; q^{1+2\nu };q;  {q^2}x   \right)  &   
 x^{-\nu} \, {}_1\varphi_1\left( 0; q^{1-2\nu};q;  {q^{2-2\nu}}x   \right)   \\
q^{ \nu } x^\nu \, {}_1\varphi_1\left( 0; q^{1+2\nu };q;  {q^3}x   \right)  &  
q^{-\nu } x^{-\nu} \, {}_1\varphi_1\left( 0; q^{1-2\nu};q;  {q^{3-2\nu}}x   \right) 
\end{pmatrix}.
\]

\medskip \noindent
{3-3)} $y_1$ satisfies
$$y_1(xq^2) +q(x-1) y_1(xq )-aq y_1(x)=0,$$
And $y_1(x)={}_1\varphi_1(a ;0; q, x)/\theta_q(-a x)$ is a special solution. 
A fundamental solution is
\[ Y=\frac{1}{\theta_q(-a x)}
\begin{pmatrix}
  {}_1\varphi_1\left( a; 0;q;   x   \right)  &   
\frac{a}{q-a}\theta_q(-a x/q){}_2\varphi_0\left(q/a, 0; -;q;  a x/q^2   \right)   \\
 -{}_1\varphi_1\left( a/q; 0;q;   x q   \right)   &  
 \theta_q(-a x/q){}_2\varphi_0\left(q^2/a, 0; -;q;  a x/q^2   \right) 
\end{pmatrix}.
\]

\medskip \noindent
{4-1)} $y_1$ satisfies
$$y_1(xq^2) +x y_1(xq )-y_1(x)=0,$$
which is the $q$-Airy equation.
A fundamental solution is
\[ Y= 
\begin{pmatrix}
 {}_1\varphi_1(0;-q;q, -x)  &   
e^{\pi i \textrm{lq}\,x} {}{}_1\varphi_1(0;-q;q, x)  \\
 {}_1\varphi_1(0;-q;q, -xq)  &  
- e^{\pi i \textrm{lq}\,  x} {}{}_1\varphi_1(0;-q;q, qx)
\end{pmatrix}.
\]

\medskip \noindent
{4-2)} $y_1$ satisfies
$$y_1(xq^2) -y_1(xq )+ x/q y_1(x)=0.$$
And $y_1(x)={\textrm{A}}_q(x)/\theta_q(x/q)$ is a special solution. 
A fundamental solution is
\[ Y=
\begin{pmatrix}
 \frac{1}{\theta_q(x/q)} {}_0\varphi_1\left( -;0;q; -q x \right)  &   
\frac{1}{x} {}_2\varphi_0\left(0, 0; -;q; - x/q  \right)   \\
 \frac{q}{\theta_q(x/q)} {}_0\varphi_1\left( -;0;q; -  x \right)    &  
{}_2\varphi_0\left(0, 0; -;q; - x/q^2   \right) 
\end{pmatrix}.
\]

\medskip \noindent
5) $y_1$ satisfies
$$y_1(xq^2)  -x y_1(x)=0.$$
And $y_1(x)=1/\theta_{q^2}(x)$ is a special solution. Since $A(xq)A(x)=\textrm{diag}(x, xq)$, 
this case reduces to a $q$-difference equation of the first order. 
A fundamental solution is
\[ Y=
\begin{pmatrix}
1/\theta_{q^2}(x) &   \theta_{q^2}(x/q) /\theta_{q }(x/q) \\
1/\theta_{q^2}(q x) &  \theta_{q^2}(x) /\theta_{q }(x)
\end{pmatrix}.
\] 
\end{proof}

\section{Summary} \label{s:s}

There exist  seven types of  $q$-hypergeometric equations:
$$ {{}_2\varphi_1,\,  _1\varphi_1},\, _2\varphi_1(0,0;c),\,
{ _1\varphi_1(0;c),\,
_1\varphi_1(a;0),\, _1\varphi_1(0;-q)},\, _0\varphi_1(-;0).$$
 {Five} of seven $q$-hypergeometric functions correspond to  
  {particular solutions of $q$-Painlev\'e equations}. 
 { Jackson's first $q$-Bessel} $_2\varphi_1(0,0;c)$ corresponds 
to particular solutions of {$q$-$P_{\textrm {III}}$}, 
which is a symmetric specialization of $q$-$P_{\textrm VI}$.

 In three types of   $q$-Bessel functions,  Hahn-Exton's $q$-Bessel 
$J^{(3)}_\nu (x;q)$  might be a  right  $q$-Bessel function. 
In two types of  $q$-Airy functions,  Hamamoto-Kajiwara-Witte's 
$q$-Airy function  $\textrm{Ai}_q(x)$  might be a right  $q$-Airy function. 
 The Ramanujan function  $_0\varphi_1(-;0)$ is connected to 
the $q$-Airy function by a {shearing transformation} and Morita's connection formula.

\end{document}